\documentclass[11pt]{article}
\usepackage[a4paper,margin=25mm]{geometry}
\usepackage[T1]{fontenc}
\usepackage[utf8]{inputenc}
\usepackage{lmodern}
\usepackage{amsmath,amssymb,amsthm,mathtools}
\usepackage{booktabs,longtable}
\usepackage{flafter}
\usepackage{tikz,pgfplots}
\pgfplotsset{compat=1.18}
\usepackage{microtype}
\usepackage{indentfirst}
\usepackage{needspace}
\usepackage{xurl}
\usepackage[colorlinks=true,linkcolor=black,citecolor=black,urlcolor=black]{hyperref}
\hypersetup{
  pdftitle={Orthodox queen domination: finite constructions and an asymptotic density gap},
  pdfauthor={Yixiang Kong},
  pdfsubject={Queen domination and a density gap for orthodox covers},
  pdfkeywords={queen domination, orthodox cover, p-cover, asymptotic density, rational certificate}
}
\numberwithin{equation}{section}
\newtheorem{theorem}{Theorem}[section]
\newtheorem{lemma}[theorem]{Lemma}
\newtheorem{proposition}[theorem]{Proposition}
\newtheorem{corollary}[theorem]{Corollary}
\newcommand{\doi}[1]{\href{https://doi.org/#1}{\nolinkurl{doi:#1}}}
\allowdisplaybreaks[1]
\title{Orthodox queen domination:\\
finite constructions and an asymptotic density gap}
\author{Yixiang Kong\\[0.4ex]
\small Sydney Smart Technology College, Northeastern University\\
\small Qinhuangdao, China\\
\small \texttt{202319280@stu.neuq.edu.cn}}
\date{September 2026}

\begin{document}
\maketitle
\begin{abstract}
An orthodox dominating set on an \(n\times n\) chessboard occupies every row of
one parity and every column of a possibly different parity. For
\(n\ge400001\), every such set contains more than
\((1/2+1/80000)n-2\) queens, on odd and even boards and with attacking or
boundary queens allowed.
Second-moment estimates and an exact rational line-weight certificate give a
stronger bound for \(p\)-covers; finite diagonal completion and board extension
transfer it to orthodox covers. The cost of extending an arbitrary dominating
set to an orthodox cover yields an inequality with an explicit defect term.
The previously constructed independent, border-free Type-A \(1\)-cover of
\(Q_{221}\) with \(111\) queens supplies a finite seed. Classical
amplification gives ordinary and independent domination upper bounds with
coefficients \(112/221\) and \(113/221\), respectively. Its order 221 is below
the density threshold 400001. For admissible seeds whose orders tend to
infinity, the lower limit of these coefficients is at least \(1/2+1/16000\).
\end{abstract}

\noindent\textbf{Keywords:} queen domination; orthodox cover; \(p\)-cover;
asymptotic density; exact certificate.

\smallskip
\noindent\textbf{Mathematics Subject Classification (2020):} 05C69, 68R05.

\section{Introduction}\label{sec:1}

The queen's graph \(Q_n\) has one vertex for each square of the \(n\times n\)
chessboard, with adjacency determined by a common row, column, or diagonal.
A dominating set covers every square, and an independent dominating set also
consists of pairwise nonattacking queens. Their minimum sizes are denoted by
\(\gamma(Q_n)\) and \(i(Q_n)\).

Structured configurations called \(p\)-covers supply exact finite examples
and seeds for asymptotic upper-bound constructions~\cite{ow2001,weakley2002upper}.
Orthodox covers form a larger family: they occupy every row of one parity and
every column of a possibly different parity, and dominate the board
(\cite[Definition~3.2]{neuhaus2009}; \cite[Section~3.1]{bozoki2019}).
The lower-bound problem considered here concerns the excess above half the
board order under these parity constraints. Throughout the paper, density
means the number of queens divided by the side length \(n\).

The Parallelogram Law relates squared line indices
(\cite[Theorem~4]{ow2001}; \cite[Theorem~1]{weakley2002lower}).
Finozhenok and Weakley~\cite{finozhenok2007} combine an absolute-value identity
at each queen with diagonal capacities and exact saturation to exclude sets of
size \((n-1)/2\) outside two exceptional boards. Weakley~\cite{weakley2022}
analyzes even boards at exact size \(n/2\), and Neuhaus
\cite[Section~3.1 and Corollary~3.16]{neuhaus2009} develops related structure
at size \((n+1)/2\). In the present estimates, additional queens and repeated
line indices contribute explicit error terms. These estimates yield an
exclusion interval whose width is proportional to the board order.
Karandikar and Dutta~\cite{karandikar2024} studied a line-covering relaxation.
The certificate below retains the common queen position when the four line
weights are evaluated.

\paragraph{Density results.}
Let \(\gamma_{\mathrm{orth}}(n)\) be the minimum size of an orthodox cover of
\(Q_n\). The principal result, Theorem~\ref{thm:2.3}, is the orthodox
density bound
\begin{equation}\label{eq:1.1}
\gamma_{\mathrm{orth}}(n)>\frac{40001}{80000}n-2
\quad(n\ge400001),\qquad
\liminf_{n\to\infty}\frac{\gamma_{\mathrm{orth}}(n)}n
\ge\frac12+\frac1{80000}.
\end{equation}
Theorem~\ref{thm:2.1} gives the stronger density estimate for the smaller
family of \(p\)-covers, with minimum size \(m_p(n)\):
\begin{equation}\label{eq:1.2}
\liminf_{\substack{n\to\infty\\n\text{ odd}}}\frac{m_p(n)}n
\ge\frac{8001}{16000}=\frac12+\frac1{16000},\qquad p\in\{0,1\}.
\end{equation}
These two lower bounds are the central density results of the paper. Their
domains are the orthodox and parity-cover classes specified in Section~\ref{sec:2}.

\paragraph{Proof tools.}
The proof of~\eqref{eq:1.2} combines a second-moment estimate with a weighted
count. The moment estimate bounds the required diagonal ranges of the whole
configuration. A large subset with pairwise distinct row, column, and diagonal
indices is then used in the weighted count. The resulting line capacities
force a positive excess above half the board order.
To obtain~\eqref{eq:1.1}, the missing diagonal coverage of a relaxed
\(p\)-cover is completed by adding queens. The relevant intersections form
two bipartite graphs with nested neighborhoods. Their degrees, and hence
the completion cost, are controlled by the additional occupied rows and
columns. A board extension by at most two rows and columns aligns the two
parities. These are the intermediate counting and reduction steps through
which the parity-cover estimate reaches the full orthodox class.

\paragraph{Finite example and consequences.}
The independent, border-free Type-A \(1\)-cover of \(Q_{221}\) with \(111\)
queens from the author's earlier preprint~\cite{kong2026} supplies a concrete
instance of the Type-A family and its amplification formula.
Neuhaus~\cite[Appendix~B]{neuhaus2009} already established
\(\gamma(Q_{221})=111\). Independence and the border-free Type-A conditions
are verified by the coordinate certificate. Thus \(i(Q_{221})=111\), and
classical amplification (\cite[Theorem~5]{ow2001}; \cite{weakley2002upper})
gives the previously reported bounds~\cite{kong2026}
\begin{equation}\label{eq:1.3}
\gamma(Q_N)\le\frac{112}{221}N+O(1),\qquad
i(Q_N)\le\frac{113}{221}N+O(1).
\end{equation}
These are upper bounds for ordinary and independent domination on the
amplified board. The seed order 221 is far below the threshold 400001 in the
density theorems; its role here is a fixed finite input to the classical
construction.
Each application holds the seed fixed while the amplified board order \(N\)
grows. A second limit concerns the choice of seed: for an admissible seed of
order \(n\) and size \(d\), the coefficients are \((d+1)/n\) and \((d+2)/n\).
Theorem~\ref{thm:2.1} gives a lower limit of at least \(8001/16000\) as the
seed order tends to infinity. This connects the finite construction with the
density gap for the class from which the seeds are drawn. The restriction
on growing seeds is a consequence of the density theorem.

The minimum cost of an orthodox extension is also determined, giving the
defect inequality of Theorem~\ref{thm:2.4} for arbitrary dominating sets.
Further logical consequences concern the zero-cover construction route and
the incompatibility of two conjectures of Neuhaus. These applications are
developed after the density proofs, in Sections~\ref{sec:9}--\ref{sec:11}.

\paragraph{Verification and organization.}
Exact Python checks and a Lean formalization support the stated results;
their scope and accompanying files are described together in
Section~\ref{sec:12}. The finite certificate argument is given in
the proof of Lemma~\ref{lem:5.1}.

Section~\ref{sec:2} fixes the definitions and states the results in proof
order. Sections~\ref{sec:3}--\ref{sec:7} establish the \(p\)-cover gap
through representative selection, moment bounds, and weighted capacities.
Sections~\ref{sec:8}--\ref{sec:9} complete the reduction to orthodox covers
and derive the defect inequality. Section~\ref{sec:10} treats the finite
seed and the two amplification limits; Section~\ref{sec:11} develops the
logical consequences and remaining questions.
Appendices~\ref{app:knots} and~\ref{app:coordinates} contain the full
weight and coordinate certificates used by the proofs and verification.

\section{Definitions and main theorems}\label{sec:2}

The definitions below specify the classes to which the density bounds
apply. Theorems~\ref{thm:2.1}--\ref{thm:2.3} are stated in proof order:
the parity-cover estimate passes through relaxed covers to the principal
orthodox bound. Theorem~\ref{thm:2.4} then records its defect-form consequence
for arbitrary dominating sets.

On a board indexed by \(0,\ldots,n-1\), a dominating set is an
\textbf{orthodox cover} if every column of some parity \(p\) and every row of
some parity \(q\) are occupied, where \(p,q\in\{0,1\}\). When \(p=q\), it is
a \textbf{relaxed \(p\)-cover}~\cite[Definitions~3.2--3.3]{neuhaus2009}.
These definitions allow arbitrary cardinality and attacking queens.
Rows and columns will also be called orthogonals.

For an odd board, centered integer coordinates are used. Write
\[
k=\frac{n-1}{4},\qquad X,Y\in\{-2k,-2k+1,\ldots,2k\}.
\]

The scale \(k\) is an integer or a half-integer. A set is
\(p\)-orthodox if every row and column of parity \(p\) is occupied. A
\(p\)-orthodox set is a \textbf{\(p\)-cover} if every square whose two coordinates have parity
\(1-p\) also lies on an occupied diagonal. The occupied orthogonals cover
the remaining squares, so every \(p\)-cover is a dominating set.
Recentering shifts the row and column parities together; the same definitions
therefore apply in centered coordinates, with the parity labels adjusted.

The two long diagonals have equations \(Y-X=0\) and \(X+Y=0\).
For a set meeting both long diagonals, define \(e,f,u\) as
in~\cite{ow2001}. The maximal central intervals of occupied even difference
and sum indices are \(2i\), \(|i|\le e\), and \(2i\), \(|i|\le f\),
respectively. Beyond these intervals, \(u\) is the largest nonnegative
integer such that the difference diagonals \(\pm(2e+4j)\) and sum diagonals
\(\pm(2f+4j)\) are all occupied for \(1\le j\le u\).

The characterization in \cite[Theorem~3]{ow2001} states that a \(p\)-orthodox set meeting both long diagonals is a \(p\)-cover precisely when it satisfies one of
\begin{equation}\label{eq:2.1}
\begin{array}{ll}
\text{Type A:}&e+f\equiv p\pmod2,\quad e+f+2u\ge2k-2,\\
\text{Type B:}&e+f\equiv1-p\pmod2,\quad e+f\ge2k-1.
\end{array}
\end{equation}

In the Type-A argument, the specified diagonal families and the parity rows
and columns are called required lines.

\begin{theorem}\label{thm:2.1}
Let \(n\ge400001\) be odd and \(p\in\{0,1\}\). Every \(p\)-cover \(C\) of \(Q_n\) satisfies
\begin{equation}\label{eq:2.2}
|C|>\frac{8001}{16000}(n-1)-1.
\end{equation}

If \(C\) meets both long diagonals, the final subtraction of one can be omitted. Attacking queens and queens on the board boundary are allowed.
\end{theorem}

The asymptotic bound~\eqref{eq:1.2} follows by dividing~\eqref{eq:2.2} by
\(n\) and taking a lower limit. The proof first treats sets meeting both
long diagonals. Adding a central queen then gives the general case.

The completion argument uses the constants
\[
c_0=\frac{8001}{16000},\qquad c_1=\frac{c_0+2}{5}=\frac{40001}{80000}.
\]

\begin{theorem}\label{thm:2.2}
Let \(n\ge400001\) be odd. Every relaxed \(p\)-cover \(C\) of \(Q_n\) satisfies
\begin{equation}\label{eq:2.3}
|C|>c_1(n-1)-\frac15.
\end{equation}
\end{theorem}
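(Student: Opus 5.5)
Starting from a relaxed \(p\)-cover \(C\), I will add queens until it becomes a genuine \(p\)-cover, and then apply Theorem~\ref{thm:2.1}. The key is to pay for the added queens with the \emph{surplus} of \(C\) over the bare \(p\)-orthodox requirement. Let \(r\) be the number of required parity-\(p\) rows, and similarly for columns; there are about \((n+1)/2\) or \((n-1)/2\) of them depending on \(p\). Let \(s\) be the number of queens of \(C\) not needed as distinct representatives of these required orthogonals. Then \(|C|\ge (\text{number of required lines}) + s\) in a suitable sense. More precisely, let \(a\) and \(b\) be the numbers of occupied rows and occupied columns of parity \(1-p\). Then \(|C|\ge \lceil n/2\rceil^{\pm}+\max(a,b)\), roughly.

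First step: identify the uncovered squares. A square with both coordinates of parity \(1-p\) that lies on no occupied diagonal must instead be covered orthogonally. So it lies in one of the \(a\) occupied rows of parity \(1-p\) or one of the \(b\) such columns. The squares that still need diagonal coverage for the \(p\)-cover property are exactly those in these extra rows and columns. They form the two bipartite graphs mentioned in the introduction: one joins uncovered parity-\((1-p)\) squares in the extra rows to difference diagonals, and one does the same for sum diagonals. Intersections of a fixed row with diagonals are intervals centered in a nested way, so the neighborhoods are nested.

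Second step: bound the completion cost. Because the neighborhoods are nested, a greedy choice of one diagonal per uncovered square, placed through queens at intersection points, covers all deficient squares. Each added queen can be placed so that it simultaneously occupies one missing difference diagonal and one missing sum diagonal. The number of missing diagonals that actually matter is bounded by the degrees, i.e., by a constant multiple of \(a+b\). The target is to show that the completed set \(C'\) is a \(p\)-cover with \(|C'|\le |C|+4(a+b)\), or a similar linear bound. The constant must match the weights in \(c_1=(c_0+2)/5\). Since \(|C|\ge (n-1)/2+\max(a,b)\), we get \(a+b\le 2(|C|-(n-1)/2)\) roughly.

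Third step: combine. Theorem~\ref{thm:2.1} gives \(|C'|>c_0(n-1)-1\). The target is then
\[
c_0(n-1)-1<|C|+4x,\qquad x\le |C|-\tfrac{n-1}{2},
\]
where \(x\) denotes the relevant surplus. Hence \(5|C|>c_0(n-1)+2(n-1)-1\), which is exactly \(|C|>c_1(n-1)-\tfrac15\).

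This arithmetic pins down the needed constants: a completion cost of \(4x\), where \(x\) is the surplus over \((n-1)/2\). I expect the main obstacle to be the second step, namely proving the completion cost is at most four queens per surplus unit uniformly in \(p\) and the parity of \(k\). I would try first to show that each extra row or column leaves at most two difference and two sum diagonals "essential" after using nesting. I would also check boundary and central-queen effects, which only change \(O(1)\) terms absorbed by the \(-1\).
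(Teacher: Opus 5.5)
Your architecture is the paper's. You complete \(C\) to a \(p\)-cover and charge the added queens to the surplus \(N-h\). Here \(N=|C|\), \(h\ge(n-1)/2\) is the number of required rows and of required columns, and \(a,b\le N-h\) because there are \(h+a\) occupied rows and \(h+b\) occupied columns. You then apply Theorem~\ref{thm:2.1}, and your step-three arithmetic coincides with the paper's.

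The gap is the completion step, which carries the real content and which you only announce. The target you state for it, \(|C'|\le|C|+4(a+b)\), is too weak. It allows cost \(8(N-h)\) and yields only the coefficient \((c_0+4)/9<c_1\). Step three silently uses cost at most \(2(a+b)\le4(N-h)\), and that is what must be proved. Nor does ``one diagonal per uncovered square'' give anything linear in \(a+b\), since there can be of order \((a+b)n\) such squares.

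The missing argument runs as follows. Put \(n=2L+1\) and \(q=1-p\). Form the bipartite graph whose vertices are the \emph{unoccupied} even-index difference and sum diagonals, with half-indices \(d,s\). Put an edge for each square \((s-d,s+d)\) lying on both, i.e.\ \(d+s\equiv q\) and \(|d|+|s|\le L\). Such a square is dominated only through one of the \(a+b\) extra orthogonals, and a fixed sum diagonal meets each orthogonal at most once. Hence every sum vertex has degree at most \(a+b\).

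Now split the graph by the parity of \(d\). Within a class, adjacency is just \(|d|+|s|\le L\), so neighbourhoods are nested. In a class containing an edge, the missing sum index \(s_0\) of least absolute value is therefore adjacent to every non-isolated difference vertex, so there are at most \(\deg s_0\le a+b\) of them. These vertices form a vertex cover. Placing a queen at \((-d,d)\) for each one completes \(C\) with at most \(2(a+b)\) additions; each such queen is on the board because \(|d|\le L\).

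Two smaller points. First, you never need to occupy missing sum diagonals, nor to make each added queen serve one missing difference and one missing sum diagonal. That pairing is also unjustified as stated, since the intersection of a chosen \(d\) and \(s\) need not lie on the board. Second, boundary effects cannot be ``absorbed by the \(-1\)''. The \(-1\) of Theorem~\ref{thm:2.1} becomes exactly the \(-\tfrac15\) in~\eqref{eq:2.3}, so any additive loss \(c\) in the completion would weaken the conclusion to \(-(1+c)/5\). The construction above has no additive loss, which is why the constant comes out exactly.
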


\begin{theorem}\label{thm:2.3}
For every integer \(n\ge400001\), every orthodox cover \(C\) of \(Q_n\) satisfies
\begin{equation}\label{eq:2.4}
|C|>c_1n-2.
\end{equation}
\end{theorem}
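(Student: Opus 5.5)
The plan is to reduce Theorem~\ref{thm:2.3} to Theorem~\ref{thm:2.2} by a board extension that aligns the two parities and makes the board odd. Let \(C\) be an orthodox cover of \(Q_n\) whose occupied column parity is \(p\) and row parity is \(q\). We will embed \(Q_n\) into a larger board \(Q_{n'}\) with \(n'\in\{n,n+1,n+2\}\) odd. We add at most two rows and at most two columns along the boundary, choosing on which side to add them so that after reindexing the fully occupied columns and rows of \(C\) both have a common parity \(r\) in \(Q_{n'}\).

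\emph{Step 1 (parity alignment).} Adding one column on the left shifts every column index by one and flips the column parity, while adding one on the right preserves it. Rows behave the same way. We use this as follows.
\begin{itemize}
\item If \(n\) is even, we must add exactly one row and one column. Their sides (left/right, top/bottom) are chosen independently so that both parities become the same \(r\).
\item If \(n\) is odd and \(p=q\), we take \(n'=n\).
\item If \(n\) is odd and \(p\ne q\), we add two columns, one on each side. This flips the column parity while keeping the board odd, and we add two rows on the right and bottom, which keeps the row parity.
\end{itemize}
In each case we must also check that every line of parity \(r\) in the new board corresponds to an occupied line of \(C\). The new boundary lines must have parity \(1-r\), or else be occupied by the added queens of Step~2. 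This is what the side choices have to guarantee, and we verify it case by case.

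\emph{Step 2 (restoring domination).} The new squares in the added rows and columns need not be dominated. We add a bounded number of queens, at most \(4\) or so, placed on the new boundary lines so that all new squares are covered. For example, a queen at the corner of each added row and column pair, together with queens on the new lines, covers the added strip entirely. The old squares remain dominated, since lines through them only lengthen. The result is a relaxed \(r\)-cover \(C'\) of \(Q_{n'}\) with \(|C'|\le |C|+a\), where \(a\) is the number of added queens.

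\emph{Step 3 (the bound).} Theorem~\ref{thm:2.2} applies, since \(n'\ge n\ge 400001\) is odd, and gives
\[
|C|\ge |C'|-a > c_1(n'-1)-\tfrac15-a \ge c_1(n-1)-\tfrac15-a .
\]
We need \(a+\tfrac15+c_1\le 2\), i.e.\ \(a\le 1\). This is the main obstacle: Step~2 must use at most one extra queen. In practice the added lines can be chosen so that a single queen at the intersection of the added row and the added column covers both new lines. When two lines of each kind are added (\(n\) odd, \(p\ne q\)), we would instead use that those lines have parity \(1-r\) and need only diagonal or existing coverage. If this fails, the fallback is to extend by one line only, where possible, and accept an even board by applying the argument twice with the \(\pm\) choices. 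We would first try to show that the new squares are already dominated along columns and rows by existing queens of \(C\) through the occupied parity lines, except at corners, so that one corner queen suffices.
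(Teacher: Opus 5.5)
Your reduction is the paper's route (Lemma~\ref{lem:9.1} followed by Theorem~\ref{thm:2.2}), but Step~3 has a genuine gap that leaves the case \(n\) odd, \(p\ne q\) unproved.

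In the chain \(|C|>c_1(n'-1)-\tfrac{1}{5}-a\ge c_1(n-1)-\tfrac{1}{5}-a\) you discard the term \(c_1(n'-n)\) gained from the larger board. That is what produces your requirement \(a\le1\), and the requirement cannot be met in general when \(n\) is odd and \(p\ne q\). There the board must grow by two: one new column on each side (to flip a parity while keeping the order odd) and two new rows on one side. A single added queen occupies only one new column and one new row. The squares of the other new column and row that lie in orthogonals of parity \(1-r\) are on no line guaranteed to be occupied by \(C\). Worse, if the resulting common parity \(r\) is even, the two new lines at indices \(0\) and \(n+1\) (both even, since \(n\) is odd) are themselves required, which forces two queens outright. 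Your fallbacks do not help. Theorem~\ref{thm:2.2} applies only to odd boards, so a one-line extension is unusable, and nothing makes the new boundary lines already dominated by \(C\).

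The missing observation is that each added queen is paid for by one unit of board growth. Pair the \(d_0=n'-n\le2\) new rows with the \(d_0\) new columns and place a queen at each intersection. Every new square then lies on an occupied new line, every new line is occupied whatever its parity, and exactly \(d_0\) queens are added. The only positioning needed is the translation by \((a,0)\) with \(a=(p-q)\bmod 2\). Keeping \(n'=n+d_0\) in Theorem~\ref{thm:2.2} gives
\[
|C|>c_1(n+d_0-1)-\tfrac{1}{5}-d_0=c_1n-c_1-\tfrac{1}{5}-(1-c_1)d_0\ge c_1n-\Bigl(\tfrac{11}{5}-c_1\Bigr)>c_1n-2,
\]
since \(\tfrac{11}{5}-c_1=\tfrac{135999}{80000}<2\). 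So two extra queens are affordable: the net cost of each is only \(1-c_1<\tfrac{1}{2}\), not \(1\).
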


Theorem~\ref{thm:2.3} includes all four choices of row and column parities.
Theorems~\ref{thm:2.2} and~\ref{thm:2.3} follow from
Theorem~\ref{thm:2.1} by finite completion and parity alignment in
Sections~\ref{sec:8} and~\ref{sec:9}.

For an arbitrary dominating set \(C\), let \(a_p(C)\) count its unoccupied columns of parity \(p\), and let \(b_q(C)\) count its unoccupied rows of parity \(q\). Define
\[
\delta(C)=\min_{p,q\in\{0,1\}}\max\{a_p(C),b_q(C)\}.
\]

\begin{theorem}\label{thm:2.4}
For every \(n\ge400001\) and every dominating set \(C\) of \(Q_n\),
\begin{equation}\label{eq:2.5}
|C|+\delta(C)>c_1n-2.
\end{equation}
\end{theorem}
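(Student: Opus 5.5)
We reduce Theorem~\ref{thm:2.4} directly to Theorem~\ref{thm:2.3}: any dominating set can be made orthodox by adding at most \(\delta(C)\) queens.

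First fix parities \(p,q\in\{0,1\}\) attaining the minimum in the definition of \(\delta(C)\). Write \(a=a_p(C)\) and \(b=b_q(C)\), so that \(\delta(C)=\max\{a,b\}\). List the unoccupied columns of parity \(p\) as \(x_1,\ldots,x_a\) and the unoccupied rows of parity \(q\) as \(y_1,\ldots,y_b\). Assume without loss of generality that \(a\ge b\). Form the set
\[
C'=C\cup\{(x_j,y_j):1\le j\le b\}\cup\{(x_j,y_0):b<j\le a\},
\]
where \(y_0\) is any row of parity \(q\); this is available since \(n\ge2\), so both parities occur among \(0,\ldots,n-1\). We pair off the missing lines and add one queen per pair, placing each leftover column's queen on an arbitrary row of parity \(q\). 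The case \(b>a\) is symmetric, with leftover rows placed on an arbitrary column of parity \(p\).

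The resulting \(C'\) occupies every column of parity \(p\) and every row of parity \(q\). It still dominates \(Q_n\), because \(C\subseteq C'\). Hence \(C'\) is an orthodox cover, and
\[
|C'|\le|C|+\max\{a,b\}=|C|+\delta(C).
\]
The orthodox definition allows attacking queens and arbitrary cardinality, so no further conditions need checking. If the added queens happen to be duplicates or attack each other, the bound \(|C'|\le|C|+\delta(C)\) still holds.

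Finally, Theorem~\ref{thm:2.3} applies to \(C'\) for \(n\ge400001\), covering all four parity choices, and gives \(|C'|>c_1n-2\). Combining this with the previous inequality yields \eqref{eq:2.5}. There is no substantive obstacle here. The only point needing care is that the orthodox class in Theorem~\ref{thm:2.3} has independent row and column parities, which is why \(\delta\) minimizes over all \((p,q)\) and not just \(p=q\).
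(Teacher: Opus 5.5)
Your proposal is correct and is essentially the paper's own proof. You fix a minimizing parity pair \((p,q)\), place queens at the intersections of paired missing columns and rows, and put one queen on each leftover line, which gives an orthodox superset of size exactly \(|C|+\delta(C)\); Theorem~\ref{thm:2.3} then yields \eqref{eq:2.5}. The paper also proves the reverse direction, that \(\delta(C)\) is the exact minimum extension cost, but \eqref{eq:2.5} does not need it.
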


The quantity \(\delta(C)\) is exactly the minimum number of additional queens required to extend \(C\) to an orthodox cover. The proof appears in Section~\ref{sec:9}.

\section{Extracting distinct representatives}\label{sec:3}

The weighted count will be applied to a subset with distinct indices within
each of the four line families. This section supplies that intermediate
counting tool and its Type-A size bound~\eqref{eq:3.1}, used in
Section~\ref{sec:6}. The subset is extracted by intersecting four sets of
representatives; the following elementary bound controls the loss.

\begin{lemma}\label{lem:3.1}
Suppose \(C\) consists of \(N\) points. For each of four families of parallel lines, let \(R_j\) distinct required lines meet \(C\). There is a subset \(G\subseteq C\) such that the four lines through each point of \(G\) are required, each line in these four families contains at most one point of \(G\), and
\[
|G|\ge N-\sum_{j=1}^4(N-R_j).
\]
\end{lemma}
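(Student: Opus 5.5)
For each family \(j\in\{1,2,3,4\}\), I will choose a set of representatives \(S_j\subseteq C\): for each of the \(R_j\) required lines of family \(j\) that meet \(C\), pick exactly one point of \(C\) on that line. Then \(|S_j|=R_j\). Every point of \(S_j\) lies on a required line of family \(j\), and each line of family \(j\) contains at most one point of \(S_j\). Lines of family \(j\) that are not required contain no point of \(S_j\) at all, since only required lines received a representative.

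Next I set \(G=S_1\cap S_2\cap S_3\cap S_4\). A point of \(G\) belongs to every \(S_j\), so its line in each family is required. Any line of family \(j\) contains at most one point of \(S_j\supseteq G\), hence at most one point of \(G\). This gives the two structural properties required of \(G\).

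For the size bound I pass to complements in \(C\), treating \(C\) as a set of \(N\) points. Writing \(C\setminus G=\bigcup_j (C\setminus S_j)\) and applying the union bound gives
\[
N-|G|\le\sum_{j=1}^4|C\setminus S_j|=\sum_{j=1}^4(N-R_j),
\]
which rearranges to the claimed inequality.

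I do not expect a real obstacle here. The one point to handle with care is making the choice of representative independently in each family; the intersection then absorbs any conflicts. If \(C\) is allowed to be a multiset, for example because of repeated queen positions, I will first reduce to its underlying set of distinct points. The bound survives this reduction: removing \(t\) repeated copies lowers \(N\) by \(t\) and does not change the \(R_j\), so the right-hand side drops by at least \(t\) and the inequality still holds.
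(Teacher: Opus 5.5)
Your proposal is correct and follows the paper's proof essentially verbatim: one representative per required line in each family, the intersection of the four representative sets, and the union bound on the complements \(C\setminus S_j\), each of size \(N-R_j\). The multiset remark is not needed, since the same argument runs unchanged on indexed points. One small correction there: removing \(t\) repeated copies actually raises the right-hand side \(\sum_j R_j-3N\) by \(3t\). That is the direction in which the bound for the reduced set implies the original inequality, so your conclusion stands.
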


\begin{proof}

For family \(j\), choose one point of \(C\) on each required line and let
\(G_j\) be the resulting set of \(R_j\) representatives. Set
\(G=\bigcap_{j=1}^4G_j\). At most \(\sum_{j=1}^4(N-R_j)\) points are
excluded from this intersection, by the union bound on the complements.
The containment \(G\subseteq G_j\) gives the required-line property and
distinctness in family \(j\).
\end{proof}

For the rest of this section and Sections~\ref{sec:4}--\ref{sec:6}, let
\(C\) be a Type-A \(p\)-cover and write \(N=|C|=2k+r\).
There are \(h\in\{2k,2k+1\}\) required rows and the same number of required
columns, so \(r\ge0\). The required diagonal counts are
\[
R_D=2e+1+2u,\qquad R_S=2f+1+2u.
\]

The Type-A inequality~\eqref{eq:2.1} gives
\(2N-R_D-R_S\le2r+2\). The two orthogonal families contribute at most
\(2(N-h)\le2r\) further exclusions. Lemma~\ref{lem:3.1} therefore gives
\begin{equation}\label{eq:3.1}
|G|\ge2k-3r-2.
\end{equation}

The capacity estimate in Section~\ref{sec:6} uses \(G\). The moment estimate
below is taken over all of \(C\), with repeated line indices counted with
multiplicity.

\section{A second-moment stability estimate}\label{sec:4}

The second moments constrain the lengths of the two required diagonal
families. The output of this step is Lemma~\ref{lem:4.1}: its rational
range bounds specify the domains of the line-weight certificate in
Section~\ref{sec:5}.

Use half-coordinates \(x=X/2,y=Y/2\) and half-diagonal indices
\(d=(Y-X)/2,s=(X+Y)/2\). The sum \(R(k)\) of the squared required half-row
indices is
\begin{equation}\label{eq:4.1}
R(k)=\begin{cases}
\frac23k^3+k^2+\frac k3,&h=2k+1,\\
\frac23k^3-\frac k6,&h=2k.
\end{cases}
\end{equation}

Both formulas hold for integer and half-integer \(k\), and the same sum
applies to columns. The required difference- and sum-diagonal moments are
\(M(e,u)\) and \(M(f,u)\), where
\begin{equation}\label{eq:4.2}
M(t,u)=2\sum_{j=1}^t j^2+2\sum_{j=1}^u(t+2j)^2
=\frac{(t+2u)^3+t^3}{3}+(t+2u)^2+\frac{t+4u}{3}.
\end{equation}

The identity \(d^2+s^2=2x^2+2y^2\) holds at every queen and underlies the
Parallelogram Law (\cite[Theorem~4]{ow2001};
\cite[Theorem~1]{weakley2002lower}). Count each required line once and treat
all remaining occurrences, including repetitions, as extras. Extra half-row
and half-column indices have absolute value at most \(k\), so the total
diagonal moment lies between \(4R(k)\) and \(4R(k)+4(N-h)k^2\).
Removing the extra diagonal occurrences costs at most \(4k^2\) per occurrence.
The bounds \(N-h\le r\) and \(2N-R_D-R_S\le2r+2\), together with
\eqref{eq:4.1}, now give
\begin{equation}\label{eq:4.3}
\frac83k^3-(8r+8)k^2-\frac23k
\le M(e,u)+M(f,u)
\le\frac83k^3+(4r+4)k^2+\frac43k.
\end{equation}

A rotation of the board allows the choice \(e\ge f\). To separate the mean
diagonal scales from their asymmetry, put
\[
\Delta=e-f,\qquad b=\frac{e+f}{4},\qquad a=b+u,\qquad v=a+b.
\]

The line counts imply
\begin{equation}\label{eq:4.4}
a\ge b\ge0,\quad k-1\le v\le k+\frac{r-1}{2},\quad0\le\Delta\le r+1.
\end{equation}

The lower bound on \(v\) comes from the Type-A condition, and its upper
bound comes from \(R_D+R_S\le2N\). For the last inequality, combine
\(2v+\Delta+1=R_D\le2k+r\) with \(v\ge k-1\).
Expanding~\eqref{eq:4.2} then separates the cubic terms from the error terms:
\begin{equation}\label{eq:4.5}
M(e,u)+M(f,u)=\frac{16}{3}(a^3+b^3)+v\Delta^2+8a^2+
\frac{\Delta^2}{2}+\frac{8a-4b}{3}.
\end{equation}

The leading term explains the two scales used in the certificate. Along a
sequence with \(r=o(k)\), equations~\eqref{eq:4.3}--\eqref{eq:4.5} give
\(a/k+b/k\to1\) and \((a/k)^3+(b/k)^3\to1/2\). Since \(a\ge b\),
the normalized pair converges to
\(((3+\sqrt3)/6,(3-\sqrt3)/6)\). The next lemma gives explicit rational
bounds valid throughout the finite range needed below.

\begin{lemma}\label{lem:4.1}
If \(k\ge100000\) and \(r\le k/4000\), then
\begin{equation}\label{eq:4.6}
\frac{e+2u}{2k}<\frac{789}{1000},\qquad
\frac{e}{2k}<\frac{53}{250}.
\end{equation}
\end{lemma}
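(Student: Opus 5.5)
The plan is to rewrite both target ratios in the normalized variables \(\alpha=a/k\) and \(\beta=b/k\). By the definitions before \eqref{eq:4.4}, \(e=2b+\Delta/2\) and \(e+2u=2a+\Delta/2\). Hence
\[
\frac{e+2u}{2k}=\alpha+\frac{\Delta}{4k},\qquad \frac{e}{2k}=\beta+\frac{\Delta}{4k}.
\]
By \eqref{eq:4.4} and \(r\le k/4000\), \(k\ge100000\), we have \(\Delta/(4k)\le (r+1)/(4k)\le 1/16000+10^{-5}/4\), which is below \(7\cdot10^{-5}\). So it suffices to show \(\alpha<0.78893\) and \(\beta<0.21193\). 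These sit above the limiting values \((3\pm\sqrt3)/6\approx0.788675,\ 0.211325\) by margins of about \(2.5\cdot10^{-4}\) and \(6\cdot10^{-4}\).

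\emph{Bound on \(\alpha\).} In \eqref{eq:4.5} every term other than \(\tfrac{16}{3}(a^3+b^3)\) is nonnegative. The exception is \(-4b/3\), and \(8a-4b\ge0\) since \(a\ge b\). Dropping these terms, the upper inequality of \eqref{eq:4.3} gives
\[
\alpha^3+\beta^3\le \tfrac12+\varepsilon,\qquad \varepsilon=\tfrac34\cdot\frac{r+1}{k}+\frac{1}{4k^2}\le 1.9\cdot10^{-4}.
\]
From \(v\ge k-1\) we get \(\alpha+\beta\ge 1-1/k\). For fixed \(\alpha\) the sum \(\alpha^3+\beta^3\) increases in \(\beta\), so \(g(\alpha):=\alpha^3+(1-1/k-\alpha)^3\le\tfrac12+\varepsilon\). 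On \(\alpha\ge\tfrac12\) the function \(g\) is increasing with derivative about \(3(2\alpha-1)\ge1.7\) near the root. It then suffices to check one rational inequality: \(g(0.78893)>\tfrac12+\varepsilon\) for the worst admissible \(k\) and \(r\). This holds because the linear gain \(1.7\cdot 2.5\cdot10^{-4}\) exceeds \(\varepsilon\) plus the \(O(1/k)\) shift of the sum.

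\emph{Bound on \(\beta\).} Use the lower inequality of \eqref{eq:4.3}. The discarded terms of \eqref{eq:4.5} must now be bounded above: \(v\Delta^2\le (k+r/2)(r+1)^2\), \(8a^2\le 8(k+r)^2\), and \(\Delta^2/2+(8a-4b)/3=O(k^2)\). After division by \(k^3\) these contribute at most about \((r+1)^2/k^2+O(1/k)\), which is negligible here. This gives
\[
\alpha^3+\beta^3\ge\tfrac12-\varepsilon',\qquad \varepsilon'\approx\tfrac32\cdot\frac{r+1}{k}+O(1/k)\le 4\cdot10^{-4}.
\]
From the upper bound in \eqref{eq:4.4} we get \(s:=\alpha+\beta\le 1+\eta\) with \(\eta=r/(2k)\le 1.25\cdot10^{-4}\). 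For \(\beta\le s/2\), the quantity \(\alpha^3+\beta^3=s^3-3s\beta(s-\beta)\) decreases in \(\beta\) and increases in \(s\). Therefore \(\beta\le\beta^*\), where \(\beta^*\) is the smaller root of
\[
(1+\eta-\beta)^3+\beta^3=\tfrac12-\varepsilon'.
\]
Linearizing, \(\beta^*-\beta_0\approx(\varepsilon'+3\alpha_0^2\eta)/(3(2\alpha_0-1))\approx(4\cdot10^{-4}+2.4\cdot10^{-4})/1.73\approx3.7\cdot10^{-4}\), which is under the \(6\cdot10^{-4}\) margin. Rigorously, I would evaluate \((1+\eta-0.21193)^3+0.21193^3<\tfrac12-\varepsilon'\) at the extreme parameters and use monotonicity.

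\emph{Main obstacle.} The hard step is the first margin, about \(2.5\cdot10^{-4}\) for \(\alpha\), which leaves little room. The error terms (\(\varepsilon\), the \(1/k\) shift, and \(\Delta/(4k)\)) must be tracked with explicit rational constants instead of \(O(\cdot)\). I would first reduce everything to monotone one-variable polynomial inequalities at the extreme values \(r=k/4000\) and \(k=100000\). I would then argue that larger \(k\) or smaller \(r/k\) only shrink the errors, and finally verify the two resulting cubic inequalities exactly in rational arithmetic.
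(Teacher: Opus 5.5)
Your proposal is correct and follows essentially the same route as the paper: normalize to $a/k,b/k$, use the two sides of \eqref{eq:4.3} with \eqref{eq:4.5} and the constraints $1-1/k\le v/k\le 1+r/(2k)$, apply monotonicity of $t^3+(v-t)^3$ at rational comparison points just below $0.789-6.5\cdot10^{-5}$ and $0.212-6.5\cdot10^{-5}$ (the paper uses $157787/200000$ and $42387/200000$), and then add $\Delta/(4k)\le(r+1)/(4k)$. Two of your constants are slightly understated: $\varepsilon$ can reach about $1.95\cdot10^{-4}$, and the discarded term $8a^2/k^3$ is about $8/k$, which gives $\varepsilon'\approx4.05\cdot10^{-4}$; the true margins at your comparison points, about $4.4\cdot10^{-4}$ above and $8.1\cdot10^{-4}$ below $1/2$, still absorb these comfortably.
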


\begin{proof}

Set
\[
\eta=\frac1{4000},\quad\tau=\frac1{100000},\quad
v_-=1-\tau,\quad v_+=1+\eta/2,\quad h_0=\eta+\tau,
\]
and write \(A=a/k,B=b/k\). The line-count and moment bounds
\eqref{eq:4.3}--\eqref{eq:4.5} give uniform constraints
\[
v_-\le A+B\le v_+,\qquad L\le\frac{16}{3}(A^3+B^3)\le U.
\]
The upper constant follows from the moment upper bound. For the lower
constant, the remaining terms of~\eqref{eq:4.5} are bounded using
\(\Delta/k\le h_0\), \(A\le v_+\), and \(1/k\le\tau\).
Explicitly,
\[
\begin{aligned}
U&=\frac83+4\eta+4\tau+\frac43\tau^2,\\
L&=\frac83-8\eta-8\tau-\frac23\tau^2-
v_+h_0^2-8v_+^2\tau-\frac{h_0^2\tau}{2}-\frac83v_+\tau^2.
\end{aligned}
\]

Choose comparison points \(A_0=157787/200000\) and
\(B_0=42387/200000\). At these points, exact rational evaluation gives the
strict margins
\[
\frac{16}{3}\bigl[A_0^3+(v_--A_0)^3\bigr]-U
=\frac{338621680389}{250000000000000}>0,
\]
\[
L-\frac{16}{3}\bigl[(v_+-B_0)^3+B_0^3\bigr]
=\frac{556961968703}{250000000000000}>0.
\]

Suppose \(A\ge A_0\). Since \(B\ge v_--A\) and
\(t^3+(v_--t)^3\) is increasing for \(t\ge A_0>v_-/2\), the first margin
forces \(16(A^3+B^3)/3>U\). Thus \(A<A_0\).
Similarly, if \(B\ge B_0\), then \(B\le(A+B)/2\le v_+/2\). The function
\((v_+-t)^3+t^3\) is decreasing on \([B_0,v_+/2]\), so the second margin
forces \(16(A^3+B^3)/3<L\). Hence \(B<B_0\).

Finally, \(A_0+h_0/4=789/1000\), \(B_0+h_0/4=53/250\), and \(\Delta/k\le h_0\). Since \((e+2u)/(2k)=A+\Delta/(4k)\) and \(e/(2k)=B+\Delta/(4k)\), \eqref{eq:4.6} follows.
\end{proof}

\section{A rational geometric certificate}\label{sec:5}

Assign nonnegative weights to the four lines through each queen of \(G\).
A pointwise inequality gives a lower bound on the weight at each queen,
while distinctness of the line indices gives an upper bound on the total.
The two bounds form a weak-duality certificate for the line-capacity
constraints. Lemma~\ref{lem:5.1} supplies the geometric inequality and
integral bound needed for the Type-A argument in Section~\ref{sec:6}.
In this section, the arguments \(x,y\) are normalized
coordinates. The application to queen coordinates is given in
Section~\ref{sec:6}.

Let \(q_L=789/1000\) and \(q_S=53/250\).

\begin{lemma}\label{lem:5.1}
There are nonnegative piecewise-linear functions \(F\) on \([0,1]\), \(P_L\) on \([0,2q_L]\), and \(P_S\) on \([0,2q_S]\) with the following properties. For \(j\in\{L,S\}\), whenever \(|x|,|y|\le1\) and \(|x-y|,|x+y|\le2q_j\),
\begin{equation}\label{eq:5.1}
F(|x|)+F(|y|)+P_j(|x-y|)+P_j(|x+y|)\ge T=1000000.
\end{equation}

Their weighted integral is
\begin{equation}\label{eq:5.2}
J=4\int_0^1F+2\int_0^{2q_L}P_L+2\int_0^{2q_S}P_S
=\frac{399330879}{200}<2T.
\end{equation}

Their total variations are \((918849,900105,372693)\), and their respective maxima are \((500000,781301,372693)\).
\end{lemma}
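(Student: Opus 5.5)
This lemma is an existence statement backed by an explicit finite certificate, so the plan is constructive. We exhibit \(F\), \(P_L\), \(P_S\) by their knot tables (Appendix~\ref{app:knots}) and then verify \eqref{eq:5.1} and \eqref{eq:5.2} by exact rational arithmetic.

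\textbf{Choosing the weights.} The functions come from a discretized linear program. Its variables are the values of \(F\), \(P_L\), \(P_S\) at the knots. Its objective is to minimize \(J\) subject to \eqref{eq:5.1} at grid points, with \(T\) fixed. The solution is then rounded to rationals with a small safety margin, so that the strict inequality \(J<2T\) survives.

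\textbf{Checking the integral and the stated invariants.} Because each function is piecewise linear, \(J\) is a finite sum of trapezoids over consecutive knots. It is computed exactly and compared with \(2T=2000000\); the stated value \(399330879/200\approx1996654\) lies below this. Nonnegativity, the total variations and the maxima are read directly off the knot values.

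\textbf{Checking the pointwise inequality.} For fixed \(j\), the domain \(\{|x|,|y|\le1,\ |x\pm y|\le 2q_j\}\) is symmetric under \(x\mapsto -x\), \(y\mapsto -y\) and \(x\leftrightarrow y\). So it suffices to take \(0\le y\le x\le1\) with \(x+y\le 2q_j\); there \(|x-y|=x-y\) and \(|x+y|=x+y\).

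Pull back the knots of \(F\) along the lines \(x=c\) and \(y=c\), and the knots of \(P_j\) along the lines \(x-y=c\) and \(x+y=c\). These lines cut the region into finitely many convex polygons. On each polygon the left side of \eqref{eq:5.1} is affine in \((x,y)\), so its minimum is attained at a vertex. Every vertex is the intersection of two knot lines (or a knot line and a boundary line), and therefore has rational coordinates. The check thus reduces to evaluating the left side exactly at finitely many rational vertices and confirming each value is \(\ge T\). Each vertex lies on several polygons, but the functions are continuous, so a single evaluation per vertex suffices.

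\textbf{Main obstacle.} The difficulty is producing weights that are good enough, not the verification itself. The margin \(2T-J\) is only about \(0.17\%\) of \(2T\). The weights must therefore exploit the two-scale geometry found in Lemma~\ref{lem:4.1}: \(P_L\) is supported on the long range \(2q_L\) and \(P_S\) on the short range \(2q_S\). Coupling them through a common \(F\) is what beats the trivial bound.

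I would first solve the LP on a coarse grid. I would then refine the knots near the points where constraint \eqref{eq:5.1} is tight, until the rational rounding still yields \(J<2T\). Finally I would run the exact vertex enumeration, as in Section~\ref{sec:12}, to certify \eqref{eq:5.1} on the whole continuous domain.
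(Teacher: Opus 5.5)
Your proposal matches the paper's verification: the knot data of Appendix~\ref{app:knots} define the functions, the symmetry reduction gives the region \(0\le y\le x\le1\), \(x+y\le2q_j\), that region is subdivided by the knot lines of \(x,y,x-y,x+y\) together with its boundary lines, the left side of \eqref{eq:5.1} is evaluated exactly at the cell vertices, and \(J\), the variations and the maxima come from trapezoidal integration and successive knot values. Your LP-and-refinement paragraph only describes how such weights might be found; it is not needed for the proof, since the explicit knot table already constitutes the certificate.
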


\begin{proof}[Verification of the certificate]

The knot values in Appendix~\ref{app:knots}, together with linear
interpolation, specify all three functions. Their nonnegativity follows
from the nonnegative knot values. The expression in~\eqref{eq:5.1} is
invariant under sign changes and interchange of \(x,y\), so it suffices to
consider \(0\le y\le x\le1\), \(x+y\le2q_j\). Subdivide this polygon by
the knot lines of \(x,y,x-y,x+y\), including its boundary lines
\(y=0\), \(x=y\), \(x=1\), and \(x+y=2q_j\).
The expression is affine on each cell, and its minimum is attained at a
cell vertex.

The line intersections inside the polygon give 1562 vertices for \(j=L\)
and 187 for \(j=S\). Exact rational evaluation gives a minimum of
\(1000000\) over each class. Trapezoidal integration gives~\eqref{eq:5.2};
the variations and maxima follow from successive knot values.
\end{proof}

Figure~\ref{fig:weights} shows the three profiles on a common vertical scale.
The supports of the diagonal weights correspond to the long and short
ranges supplied by Lemma~\ref{lem:4.1}.
\begin{figure}[tbp]
\centering
\begin{minipage}{0.32\textwidth}
\centering
\begin{tikzpicture}
\begin{axis}[
  width=\linewidth,height=4.8cm,
  axis lines=left,axis line style={thin},
  xmin=0,xmax=1,ymin=0,ymax=0.85,
  xtick={0,0.5,1},ytick={0,0.4,0.8},
  scaled ticks=false,tick label style={font=\scriptsize},
  xlabel={$t$},ylabel={$H(t)/T$},
  label style={font=\small},title={$F$},title style={font=\small},
  clip=false]
\addplot[black,semithick,no marks] coordinates {
(0,0.5) (0.025,0.5) (0.05,0.489602) (0.075,0.479203) (0.1,0.458405) (0.125,0.437608) (0.15,0.406411) (0.175,0.375215) (0.2,0.33362) (0.225,0.292025) (0.25,0.240031) (0.275,0.188037) (0.3,0.157078) (0.325,0.128899) (0.35,0.1035) (0.375,0.08088) (0.4,0.061041) (0.425,0.043981) (0.45,0.029702) (0.475,0.018202) (0.5,0.009482) (0.525,0.003541) (0.55,0.000381) (0.575,0) (0.6,0.0024) (0.625,0.007579) (0.65,0.015538) (0.675,0.026277) (0.7,0.039796) (0.725,0.056094) (0.75,0.075173) (0.775,0.103277) (0.8,0.131382) (0.825,0.149087) (0.85,0.179285) (0.875,0.212263) (0.9,0.24802) (0.925,0.286558) (0.95,0.327875) (0.975,0.371972) (1,0.418849)
};
\end{axis}
\end{tikzpicture}
\end{minipage}\hfill
\begin{minipage}{0.32\textwidth}
\centering
\begin{tikzpicture}
\begin{axis}[
  width=\linewidth,height=4.8cm,
  axis lines=left,axis line style={thin},
  xmin=0,xmax=1.578,ymin=0,ymax=0.85,
  xtick={0,0.5,1,1.5},ytick={0,0.4,0.8},
  scaled ticks=false,tick label style={font=\scriptsize},
  xlabel={$t$},
  label style={font=\small},title={$P_L$},title style={font=\small},
  clip=false]
\addplot[black,semithick,no marks] coordinates {
(0,0.781301) (0.025,0.781301) (0.05,0.778521) (0.075,0.775742) (0.1,0.770182) (0.125,0.764622) (0.15,0.756283) (0.175,0.747943) (0.2,0.736824) (0.225,0.725704) (0.25,0.711805) (0.275,0.697906) (0.3,0.681227) (0.325,0.664548) (0.35,0.645089) (0.375,0.62563) (0.4,0.603391) (0.425,0.581152) (0.45,0.556133) (0.475,0.531115) (0.5,0.503316) (0.525,0.475518) (0.55,0.444939) (0.575,0.414361) (0.6,0.381003) (0.625,0.347645) (0.65,0.311507) (0.675,0.275369) (0.7,0.250918) (0.725,0.231666) (0.75,0.212414) (0.775,0.198362) (0.8,0.18431) (0.825,0.175457) (0.85,0.166604) (0.875,0.162951) (0.9,0.159298) (0.925,0.170797) (0.95,0.182297) (0.975,0.191017) (1,0.199737) (1.025,0.205678) (1.05,0.211618) (1.075,0.214778) (1.1,0.217939) (1.125,0.218319) (1.15,0.2187) (1.175,0.2163) (1.2,0.213901) (1.225,0.208722) (1.25,0.203543) (1.275,0.195584) (1.3,0.187625) (1.325,0.176886) (1.35,0.166147) (1.375,0.152628) (1.4,0.139109) (1.425,0.122811) (1.45,0.106512) (1.475,0.087434) (1.5,0.068355) (1.525,0.046497) (1.55,0.024639) (1.575,0) (1.578,0)
};
\end{axis}
\end{tikzpicture}
\end{minipage}\hfill
\begin{minipage}{0.32\textwidth}
\centering
\begin{tikzpicture}
\begin{axis}[
  width=\linewidth,height=4.8cm,
  axis lines=left,axis line style={thin},
  xmin=0,xmax=0.424,ymin=0,ymax=0.85,
  xtick={0,0.2,0.4},ytick={0,0.4,0.8},
  scaled ticks=false,tick label style={font=\scriptsize},
  xlabel={$t$},
  label style={font=\small},title={$P_S$},title style={font=\small},
  clip=false]
\addplot[black,semithick,no marks] coordinates {
(0,0) (0.025,0) (0.05,0.0052) (0.075,0.010399) (0.1,0.020798) (0.125,0.031197) (0.15,0.046795) (0.175,0.062393) (0.2,0.083191) (0.225,0.103988) (0.25,0.129985) (0.275,0.155982) (0.3,0.187179) (0.325,0.218375) (0.35,0.254771) (0.375,0.291167) (0.4,0.332762) (0.424,0.372693)
};
\end{axis}
\end{tikzpicture}
\end{minipage}
\caption{The piecewise-linear weights of Lemma~\ref{lem:5.1}. Each panel shows $H(t)/T$ for the function named above, with $T=10^6$ and a common vertical scale. The horizontal ranges are $[0,1]$, $[0,789/500]$, and $[0,53/125]$. Exact knot values appear in Appendix~\ref{app:knots}.}
\label{fig:weights}
\end{figure}

\section{The Type-A density bound}\label{sec:6}

Applying the certificate to the representative set \(G\) completes the
Type-A part of Theorem~\ref{thm:2.1}. The moment bounds place each selected
queen in one of the certificate domains, and a finite lattice estimate bounds the
total available line weight.

The required half-difference indices split into a long parity class
\(d\equiv e\pmod2\), of radius \(e+2u\), and the other parity class,
of radius at most \(e\). The required half-sum indices split in the same
way, with \(f\) in place of \(e\).

For a queen of the extracted set \(G\), both original coordinates have parity \(p\). Thus \(d,s\) are integers and
\[
d+s=Y\equiv p\equiv e+f\pmod2.
\]

Consequently, the two diagonal indices are either both long or both short, for
either choice of \(p\). At the normalized position
\((X/(2k),Y/(2k))\), the diagonal arguments in~\eqref{eq:5.1} are
\(|d|/k\) and \(|s|/k\). Their bounds follow from
Lemma~\ref{lem:4.1} and \(f\le e\). Lemma~\ref{lem:5.1} therefore assigns
weight at least \(T\) to every point of \(G\).

The total available weight is bounded by a finite trapezoidal estimate.
Let \(H\ge0\) be piecewise affine on \([0,R]\), with knots
\(0=t_0<\cdots<t_m=R\), and put
\[
M(H)=\sum_{i=0}^{m-1}\max\{H(t_i),H(t_{i+1})\}.
\]
On one affine segment, the sum over consecutive lattice points of spacing
\(\sigma\) equals the trapezoidal area between the first and last points
divided by \(\sigma\), plus half the sum of their values. Nonnegativity
bounds this by the whole segment area divided by \(\sigma\), plus its
endpoint maximum. The same bound holds for any subset of the lattice points, including the empty
set. Applying it separately to every segment and its reflection gives
\begin{equation}\label{eq:6.1}
\sum H(|t|)\le\frac{2}{\sigma}\int_0^R H(t)\,dt+2M(H).
\end{equation}
Here the sum is over any finite subset of a translated lattice in
\([-R,R]\). Counting a shared endpoint twice preserves the upper bound.

Normalized required row and column indices have spacing \(1/k\), and each
diagonal parity class has spacing \(2/k\). Apply~\eqref{eq:6.1} to the two
orthogonal sums and the four diagonal sums: a long and a short sum in each
diagonal direction. Distinctness of the line indices in \(G\) gives
\[
|G|T\le kJ+E,\qquad
E=4\bigl(M(F)+M(P_L)+M(P_S)\bigr)=134056704.
\]
The area terms add to \(kJ\). The endpoint-maximum sums for
\(F,P_L,P_S\) are \(7988269\), \(23219032\), and \(2306875\),
respectively, as calculated from Appendix~\ref{app:knots}.

Together with~\eqref{eq:3.1}, this yields \((2k-3r-2)T\le kJ+E\).
For \(k\ge100000\) and \(r\le k/4000\), the difference has the following
strictly positive lower bound, with \(\eta=1/4000\) and \(\tau=1/100000\):
\[
\frac{(2k-3r-2)T-kJ-E}{k}
\ge (2-3\eta)T-J-(E+2T)\tau
=\frac{30875949}{25000}>0.
\]

The contradiction proves \(N>(2+1/4000)k\) for Type A.

\section{Type B and the central-queen extension}\label{sec:7}

This section completes Theorem~\ref{thm:2.1}. A moment comparison treats
Type B, and the central-queen extension covers configurations with an
unoccupied long diagonal. The resulting parity-cover theorem is the input
to the completion argument in Section~\ref{sec:8}.

For Type B, the second moments alone give the desired bound. Let \(C\)
have \(N=2k+r\) queens. Its central required half-diagonal families have
sizes \(2e+1\) and \(2f+1\). The inequality \(e+f\ge2k-1\) leaves at
most \(2r\) additional diagonal occurrences, and the individual line counts
give \(e,f\le k+(r-1)/2\).

The total diagonal moment is at least \(4R(k)\). It is bounded above by
\(M(e,0)+M(f,0)\) plus at most \(8rk^2\) from the extra occurrences.
For \(k\ge100000\) and \(r\le k/4000\), put \(\eta=1/4000\),
\(\tau=1/100000\), and \(v_+=1+\eta/2\). Dividing the moment comparison
by \(k^3\) gives
\[
\frac83-\frac23\tau^2
\le\frac43v_+^3+2v_+^2\tau+\frac23v_+\tau^2+8\eta.
\]

The left side exceeds the right side by
\(53232530627883/40000000000000\), excluding this range of \(r\) for
Type B as well.

Every \(p\)-cover meeting both long diagonals is classified by \eqref{eq:2.1}. The preceding arguments establish
\[
|C|>\left(2+\frac1{4000}\right)k=\frac{8001}{16000}(n-1).
\]

To cover the general case, take \(C'=C\cup\{(0,0)\}\). The added
central queen occupies both long diagonals and preserves all coverage
conditions. Since \(|C'|\le|C|+1\), applying the stronger estimate to
\(C'\) proves Theorem~\ref{thm:2.1}.

\section{Diagonal completion of relaxed covers}\label{sec:8}

The next reduction turns a relaxed cover into a parity cover on the same
odd board. Lemma~\ref{lem:8.1} bounds the number of added queens, yielding
Theorem~\ref{thm:2.2}. A small-board example illustrates the coverage that
the construction supplies.

A relaxed \(p\)-cover may use additional occupied rows and columns to cover squares with both coordinates of parity \(1-p\). For example, in centered coordinates on \(Q_5\), the set
\[
\{(-1,-1),(1,1),(2,2)\}
\]
is a relaxed 1-cover. The extra row and column through \((2,2)\) cover
\((-2,2)\) and \((2,-2)\), respectively. Adding a queen at the center
gives these two squares diagonal coverage and produces a 1-cover
(Figure~\ref{fig:completion}). The following lemma bounds the cost of this
completion on an arbitrary odd board.
\begin{figure}[htbp]
\centering
\begin{tikzpicture}[x=0.75cm,y=0.75cm]
\foreach \panel in {0,1}{
  \begin{scope}[xshift=\panel*6.8cm]
    \foreach \b in {-2.5,-1.5,-0.5,0.5,1.5,2.5}{
      \draw[very thin] (\b,-2.5)--(\b,2.5);
      \draw[very thin] (-2.5,\b)--(2.5,\b);
    }
    \draw[thin] (-2.5,-2.5) rectangle (2.5,2.5);
    \foreach \i in {-2,-1,0,1,2}{
      \node[font=\scriptsize,below] at (\i,-2.5) {$\i$};
      \node[font=\scriptsize,left] at (-2.5,\i) {$\i$};
    }
    \ifnum\panel=0
      \draw[densely dashed,semithick] (-2,2)--(2,2)--(2,-2);
      \node[font=\small] at (0,-3.35) {(a) Relaxed $1$-cover $C$};
    \else
      \draw[densely dashed,semithick] (-2.5,2.5)--(2.5,-2.5);
      \draw[fill=white,semithick] (0,0) circle[radius=3.1pt];
      \node[font=\small] at (0,-3.35) {(b) $1$-cover $C\cup\{(0,0)\}$};
    \fi
    \foreach \x/\y in {-1/-1,1/1,2/2}{\fill (\x,\y) circle[radius=2.8pt];}
    \foreach \x/\y in {-2/2,2/-2}{
      \draw[semithick] (\x-0.13,\y-0.13)--(\x+0.13,\y+0.13);
      \draw[semithick] (\x-0.13,\y+0.13)--(\x+0.13,\y-0.13);
    }
  \end{scope}
}
\end{tikzpicture}
\caption{Diagonal completion on $Q_5$. Filled dots represent $C=\{(-1,-1),(1,1),(2,2)\}$; the open circle marks the added queen. Crosses indicate $(-2,2)$ and $(2,-2)$. In (a), their coverage is along the dashed row and column. In (b), both also lie on the newly occupied sum diagonal $X+Y=0$.}
\label{fig:completion}
\end{figure}

\begin{lemma}[diagonal completion]\label{lem:8.1}
Let \(C\) be a relaxed \(p\)-cover on an odd board. Let \(A\) and \(B\) be the numbers of occupied columns and rows, respectively, of parity \(1-p\). There is a \(p\)-cover \(P\supseteq C\) on the same board with
\begin{equation}\label{eq:8.1}
|P\setminus C|\le2(A+B).
\end{equation}
\end{lemma}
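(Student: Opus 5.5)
The plan is to recast the missing diagonal coverage as a vertex-cover problem in bipartite graphs with nested neighborhoods, and to bound the cover size by a maximum degree.

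\textbf{Setup.} Call a square \emph{deficient} if both of its coordinates have parity \(1-p\) and neither diagonal through it is occupied by \(C\). These are exactly the squares that prevent \(C\) from being a \(p\)-cover. Adding queens never destroys \(p\)-orthodoxy. It therefore suffices to add queens so that every deficient square gains an occupied diagonal. A queen placed anywhere on an unoccupied diagonal occupies that diagonal, so the cost is the number of diagonals we choose to occupy.

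\textbf{Graph formulation.} Form a bipartite graph whose two sides are the unoccupied difference diagonals and the unoccupied sum diagonals that pass through squares with both coordinates of parity \(1-p\). Join a difference diagonal to a sum diagonal whenever they intersect in a deficient square. Every deficient square is then an edge, and a set of diagonals covering all deficient squares is a vertex cover. Occupying all diagonals in a vertex cover, one added queen each, yields a \(p\)-cover \(P\supseteq C\).

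In centered half-coordinates \(d,s\), the intersection of diagonals \(d\) and \(s\) is the square \(X=s-d\), \(Y=s+d\). It lies on the board iff \(|s-d|,|s+d|\le 2k\), that is, iff \(|d|+|s|\le 2k\). The parity condition on \(X,Y\) fixes the parity of \(d+s\). So I split the graph into two components according to the parity class of \(d\), which determines that of \(s\). Within one component, the neighborhood of a difference diagonal \(d\) consists of the unoccupied sum diagonals of the right parity with \(|s|\le 2k-|d|\). These sets are nested, decreasing in \(|d|\).

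\textbf{Degree bound.} A deficient square is still dominated by \(C\). It is not on an occupied diagonal, and its row and column have parity \(1-p\). Hence it lies on one of the \(A\) occupied columns or \(B\) occupied rows of parity \(1-p\). A diagonal meets each row and each column at most once. Therefore every vertex has degree at most \(A+B\) in each component.

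\textbf{Matching argument.} By K\H{o}nig's theorem, a minimum vertex cover of a bipartite graph equals a maximum matching. In a graph with nested neighborhoods, take a matching of size \(m\). All matched sum diagonals lie in the neighborhood of the matched difference diagonal whose neighborhood is largest. So that vertex has degree at least \(m\), giving \(m\le A+B\). Each of the two components thus needs at most \(A+B\) added queens, which gives \(|P\setminus C|\le 2(A+B)\).

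\textbf{Main obstacle.} The step needing care is the nestedness claim. Restricting the graph to \emph{unoccupied} diagonals only deletes vertices, so the threshold form \(|d|+|s|\le 2k\) survives. However, it must also be checked that all squares of a component share the same parity constraint. This is why the graph is split into the two components above rather than treated as one.
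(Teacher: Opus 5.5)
Your proof is correct and follows the paper's argument. It uses the same bipartite graph on missing even-index diagonals with edges \(|d|+|s|\le L\), the same split into two parity subgraphs with nested neighborhoods, and the same degree bound \(A+B\) from the extra occupied orthogonals. The only difference is the final step: instead of K\H{o}nig's theorem, the paper notes that every nonisolated difference diagonal is adjacent to the missing sum diagonal \(s_0\) of least \(|s|\). The nonisolated difference diagonals therefore form an explicit cover of size at most \(\deg(s_0)\le A+B\) per subgraph.
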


\begin{proof}

Write \(n=2L+1\), use coordinates \(-L,\ldots,L\), and put \(q=1-p\). A square \((X,Y)\) with both coordinates of parity \(q\) has integer half-diagonal indices
\[
d=(Y-X)/2,\qquad s=(X+Y)/2,\qquad (X,Y)=(s-d,s+d).
\]

Let \(\mathcal M_D\) and \(\mathcal M_S\) contain the half-indices of
unoccupied board diagonals with even difference and sum indices,
respectively. A square of the indicated parity requires diagonal completion
precisely when
\begin{equation}\label{eq:8.2}
d\in\mathcal M_D,\quad s\in\mathcal M_S,\quad
d+s\equiv q\pmod2,\quad |d|+|s|\le L.
\end{equation}

Here the board condition follows from
\(\max\{|s-d|,|s+d|\}=|d|+|s|\).

Condition~\eqref{eq:8.2} defines a bipartite graph on the missing difference
and sum diagonals. It splits into two parity subgraphs: for \(b=0,1\), take
\(d\equiv b\pmod2\) and \(s\equiv q-b\pmod2\). Within either subgraph,
ordering indices by absolute value gives nested neighborhoods.

Every edge represents a square whose domination by \(C\) is supplied by one
of the \(A+B\) additional occupied orthogonals. A fixed sum diagonal meets
each such orthogonal in at most one square, so every sum-diagonal vertex
has degree at most \(A+B\). It remains to bound the number of difference
vertices incident with an edge.

In a parity subgraph containing an edge, choose a missing sum index \(s_0\) of
minimum absolute value. Every nonisolated difference vertex \(d\) is
adjacent to \(s_0\), since any existing neighbor \(s\) gives
\[
|d|+|s_0|\le |d|+|s|\le L.
\]

Consequently, the number of nonisolated difference vertices in this subgraph is at most \(A+B\). Across both subgraphs there are at most \(2(A+B)\) such vertices.

For each such index \(d\), add a queen at \((-d,d)\). Incidence with an
edge gives \(|d|\le L\), so this square lies on the board. Its difference
diagonal was unoccupied, and different indices give different new queens.
Every square represented by an edge now has diagonal coverage. The required
orthogonals remain occupied, so the enlarged set is a \(p\)-cover and
satisfies~\eqref{eq:8.1}.
\end{proof}

\begin{proof}[Proof of Theorem~\ref{thm:2.2}]

Put \(N=|C|\), and let \(h\) be the common number of required rows and
columns. There are \(h+A\) occupied columns and \(h+B\) occupied rows.
Thus \(A,B\le N-h\), and Lemma~\ref{lem:8.1} supplies a \(p\)-cover of
size at most \(N+4(N-h)=5N-4h\). Applying Theorem~\ref{thm:2.1} gives
\begin{equation}\label{eq:8.3}
5N-4h>c_0(n-1)-1,
\qquad
N>\frac{c_0(n-1)+4h-1}{5}.
\end{equation}

Using \(h\ge(n-1)/2\) proves \eqref{eq:2.3}.
\end{proof}

The same calculation transfers any strict bound \(|P|>f(n)\) for
\(p\)-covers to the bound \(|C|>(f(n)+4h)/5\) for relaxed \(p\)-covers.
In particular, the asymptotic excess above one half is multiplied by one
fifth under this completion estimate.

\section{Parity alignment and orthodoxy defect}\label{sec:9}

The remaining parity choices are handled by a small board extension. This
preserves the leading density coefficient and changes only the additive
constant, completing the proof of the principal result,
Theorem~\ref{thm:2.3}. The final argument identifies the exact cost of an
orthodox extension and derives Theorem~\ref{thm:2.4}.

\begin{lemma}[board extension]\label{lem:9.1}
An orthodox cover of \(Q_n\) extends, after a translation, to a relaxed \(p\)-cover of an odd board \(Q_m\), where \(0\le m-n\le2\). Exactly \(m-n\) additional queens suffice.
\end{lemma}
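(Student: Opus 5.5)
We would prove Lemma~\ref{lem:9.1} by placing the given board inside a slightly larger odd board, with independent offsets for the rows and the columns. Each added row is paired with an added column, and one new queen goes on their intersection. Let \(C\) be an orthodox cover of \(Q_n\) on indices \(0,\ldots,n-1\), with every column of parity \(p\) and every row of parity \(q\) occupied. Choose an odd \(m\in\{n,n+1,n+2\}\) and offsets \(c_x,c_y\in\{0,\ldots,m-n\}\). Embed \(Q_n\) into \(Q_m\) by \((i,j)\mapsto(i+c_x,j+c_y)\); this is the translation in the statement. An old column of parity \(p\) goes to a column of parity \(p+c_x\), and an old row of parity \(q\) goes to a row of parity \(q+c_y\). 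So we need
\[
p+c_x\equiv q+c_y\pmod 2,
\]
and then we set \(p'\equiv p+c_x\).

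\textbf{Choice of \(m\) and the offsets.} If \(n\) is odd and \(p=q\), take \(m=n\) and \(c_x=c_y=0\); there is nothing to do. If \(n\) is even, take \(m=n+1\) and choose \(c_x,c_y\in\{0,1\}\) with \(c_x-c_y\equiv q-p\). If \(n\) is odd and \(p\ne q\), take \(m=n+2\), \(c_x=1\) and \(c_y=0\). In the last case the new columns are \(0\) and \(n+1\) and the new rows are \(n\) and \(n+1\). In every case exactly \(m-n\) new columns and \(m-n\) new rows appear.

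\textbf{New queens.} Pair the new columns with the new rows bijectively, and put one queen at each paired intersection. This uses exactly \(m-n\) queens, and each new queen lies on a distinct new row and a distinct new column.

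\textbf{Checking the result is a relaxed \(p'\)-cover.} We verify three properties of the enlarged set \(C'\).
\begin{itemize}
\item[(i)] Every column of \(Q_m\) of parity \(p'\) is occupied. Such a column is either new, hence occupied by its new queen, or the image of an old column of parity \(p\), hence occupied by \(C\). The same argument applies to rows of parity \(p'\).
\item[(ii)] Every square of \(Q_m\) is dominated. An image of an old square is dominated as before, since the translation preserves rows, columns and diagonals. A square in a new row is attacked along that row by the queen placed there. A square in a new column is attacked along that column.
\item[(iii)] The definition of a relaxed \(p\)-cover allows arbitrary cardinality and attacking queens, so nothing further is required.
\end{itemize}

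\textbf{Expected obstacle.} The only step that needs care is the case with \(n\) odd and \(p\ne q\). A one-step extension would give an even board, so two rows and two columns must be added, with the column offset odd and the row offset even. All other steps are routine bookkeeping.
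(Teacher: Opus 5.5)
Your proof is correct and follows essentially the same route as the paper. The paper uses the same three-case choice of \(m\), translates only the columns by \(a=(p-q)\bmod 2\) (your offsets with \(c_y=0\)), and places \(m-n\) queens at paired intersections of the new rows and columns; allowing an independent row offset when \(n\) is even is a harmless generalization.
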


\begin{proof}

Use board coordinates \(0,\ldots,n-1\), and let the occupied parity classes be \(p\) for columns and \(q\) for rows. Choose
\[
m=\begin{cases}
n,& n\text{ odd and }p=q,\\
n+1,& n\text{ even},\\
n+2,& n\text{ odd and }p\ne q,
\end{cases}
\qquad a=(p-q)\bmod2\in\{0,1\}.
\]

Translate every queen by \((a,0)\), embedding the original board as
\([a,a+n-1]\times[0,n-1]\) inside the \(m\times m\) board. The required column parity becomes \(p+a\equiv q\pmod2\), matching the required row parity.

There are \(d_0=m-n\) new rows and the same number of new columns.
Pair them and place a queen at each of the \(d_0\) intersections.
Every square outside the embedded board lies in one of these occupied lines;
the translated original queens still dominate the embedded board.
All rows and columns of parity \(q\) are occupied, so the result is a
relaxed \(q\)-cover on the odd board.
\end{proof}

\begin{proof}[Proof of Theorem~\ref{thm:2.3}]

Apply Lemma~\ref{lem:9.1} and Theorem~\ref{thm:2.2} to obtain
\[
|C|+d_0>c_1(m-1)-\frac15.
\]

Since \(m=n+d_0\), \(d_0\le2\), and \(c_1<1\),
\[
|C|>c_1n-c_1-\frac15-(1-c_1)d_0
\ge c_1n-\left(\frac{11}{5}-c_1\right)>c_1n-2.
\]

The final comparison uses \(11/5-c_1=135999/80000<2\). This also proves \eqref{eq:1.1}.
\end{proof}

\begin{proof}[Proof of Theorem~\ref{thm:2.4}]

Fix parity choices \(p,q\) attaining the minimum in \(\delta(C)\).
Pair the unoccupied columns of parity \(p\) with the unoccupied rows of
parity \(q\), adding a queen at each intersection. Place one further queen
on each remaining unpaired line. Exactly \(\delta(C)\) queens are added,
producing an orthodox superset.

For any prescribed parity pair, at least
\(\max\{a_p(C),b_q(C)\}\) additions are needed: a queen occupies one row
and one column. Minimizing this necessary number over the pairs proves the
interpretation of \(\delta(C)\) as the exact extension cost.
Theorem~\ref{thm:2.3}, applied to the constructed superset, gives~\eqref{eq:2.5}.
\end{proof}

For any sequence of dominating sets \(C_n\) with \(|C_n|=n/2+o(n)\), Theorem~\ref{thm:2.4} yields
\begin{equation}\label{eq:9.1}
\liminf_{n\to\infty}\frac{\delta(C_n)}n\ge\frac1{80000}.
\end{equation}

Here the numerator counts the minimum number of additional queens needed
to extend the given dominating set to an orthodox cover.
Thus every such sequence has a linear asymptotic cost of extension to the
orthodox class.

\section{Finite seeds and amplification limits}\label{sec:10}

With the density proofs complete, the finite seed and the growing-seed
consequence can be treated separately. The \(Q_{221}\) configuration
from~\cite{kong2026} illustrates classical amplification at a fixed seed
order. Its four line-index multisets verify the hypotheses of that
construction. The density theorem then constrains the coefficients along
growing sequences of admissible seeds.

\subsection{An independent border-free cover of \texorpdfstring{\(Q_{221}\)}{Q221}}

Use centered integer coordinates \(-110\le X,Y\le110\), and let
\(C_{221}\) be the 111 points reproduced from~\cite{kong2026} in
Appendix~\ref{app:coordinates}. The four line indices of a queen \((X,Y)\)
are \(X,Y,Y-X,X+Y\).

\begin{proposition}\label{prop:10.1}
The set \(C_{221}\) is an independent, border-free Type-A 1-cover with parameters
\[
(e,f,u)=(24,23,31).
\]

In particular, \(\gamma(Q_{221})=i(Q_{221})=111\).
\end{proposition}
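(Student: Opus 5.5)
The proof is a finite verification on the explicit coordinate list of Appendix~\ref{app:coordinates}, organized around the four line-index multisets \(X\), \(Y\), \(Y-X\), \(X+Y\) of the 111 queens. All checks are exact integer computations, the kind the Python scripts of Section~\ref{sec:12} perform. The structure is: orthodoxy, then the diagonal parameters, then the criterion~\eqref{eq:2.1}, then independence and border-freeness, and finally the domination numbers.

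\textbf{Orthodoxy.} On \(Q_{221}\) we have \(k=55\). Since the board is indexed \(0,\ldots,220\) and centered coordinates shift by \(110\), parities are preserved. A \(1\)-cover must occupy the \(110\) odd rows and the \(110\) odd columns. I would check that the \(X\)-multiset contains every odd value in \([-109,109]\) and the \(Y\)-multiset does likewise. With \(111\) queens, at most one queen lies outside these required lines in each orthogonal family.

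\textbf{Diagonal parameters.} Both long diagonals must be met, meaning \(0\) appears among the differences and among the sums. I would then read off \(e\) as the largest \(t\) such that every even difference \(2i\) with \(|i|\le t\) occurs, and similarly \(f\) from the sums. Next I would find \(u\) by checking the diagonals \(\pm(2e+4j)\) and \(\pm(2f+4j)\) for \(j=1,2,\ldots\). The claimed values \((24,23,31)\) should follow, together with the failure of the next index, which shows maximality.

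\textbf{Type~A criterion.} I would verify \(e+f=47\equiv1=p\pmod2\) and \(e+f+2u=109\ge2k-2=108\). By the characterization \cite[Theorem~3]{ow2001}, this gives a Type-A \(1\)-cover. For a self-contained check, I would also confirm domination directly by brute force over all \(221^2\) squares. One consistency point is worth recording: the long ranges \(e+2u=86\) and \(f+2u=85\) give half-diagonal radius at most \(110\), so every required diagonal actually exists on the board.

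\textbf{Independence and border-freeness.} Independence holds exactly when each of the four multisets has no repeated value, which is easy to check by sorting. For border-freeness I would verify \(|X|,|Y|\le109\) for every queen, so that no queen lies on the boundary.

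\textbf{Domination numbers.} Finally, \(\gamma(Q_{221})=111\) is taken from Neuhaus~\cite[Appendix~B]{neuhaus2009}. Since \(\gamma\le i\) and we have exhibited an independent dominating set of size \(111\), it follows that \(i(Q_{221})=111\).

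The main obstacle is bookkeeping rather than ideas. It lies in reproducing the parameter \(u\) correctly, in particular the convention that the indices \(\pm(2e+4j)\) use full (not half) diagonal indices, and in checking maximality of \(e\), \(f\) and \(u\) against the data. I would resolve any convention mismatch by cross-checking against the equivalent half-index description of Section~\ref{sec:4}, where \(R_D=2e+1+2u=111\) required difference diagonals must all be occupied.
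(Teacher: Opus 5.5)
Your proposal is correct and follows the paper's proof essentially step by step. You compute the four line-index multisets and read off $(e,f,u)=(24,23,31)$, with maximality certified by the absent indices $\pm50,\pm48$ and $\pm176,\pm174$. You then apply the Type-A criterion with $e+f=47$ odd and $e+f+2u=109\ge108$, and obtain independence and border-freeness from distinctness of the multisets and $|X|,|Y|\le109$.

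The only difference is the lower bound on $\gamma$. Instead of citing Neuhaus's value $\gamma(Q_{221})=111$, the paper uses the general bound $\gamma(Q_{4k+1})\ge2k+1$ from \cite[equation~(2)]{ow2001}. That bound is all that is needed, since your exhibited set supplies the upper bound; your brute-force domination check is a harmless redundancy.
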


Figure~\ref{fig:q221} shows the placement.
\begin{figure}[tbp]
\centering
\begin{tikzpicture}
\begin{axis}[
  width=9cm,height=9cm,scale only axis,axis equal image,
  xmin=-110.5,xmax=110.5,ymin=-110.5,ymax=110.5,
  axis lines=box,axis line style={thin},
  xtick={-110,-55,0,55,110},ytick={-110,-55,0,55,110},
  tick align=outside,tick label style={font=\footnotesize},
  xlabel={$X$},ylabel={$Y$},label style={font=\small},
  clip=true]
\addplot[black,thin,densely dashed,no marks] coordinates {(-110.5,-110.5) (110.5,110.5)};
\addplot[black,thin,densely dashed,no marks] coordinates {(-110.5,110.5) (110.5,-110.5)};
\addplot[only marks,mark=*,mark size=1.15pt,black] coordinates {
(-109,-61) (-107,-43) (-105,-37) (-103,57) (-101,63) (-99,33) (-97,-57) (-95,-35) (-93,-9) (-91,17) (-89,11) (-87,37) (-85,43) (-83,-3) (-81,75) (-79,69) (-77,-85) (-75,-47) (-73,-41) (-71,101) (-69,-49) (-67,-79) (-65,79) (-63,105) (-61,35) (-59,-107) (-57,-101) (-55,81) (-53,-45) (-51,-75) (-49,55) (-47,93) (-45,67) (-43,109) (-41,-39) (-39,-67) (-37,-73) (-35,-99) (-33,-105) (-31,-13) (-29,91) (-27,7) (-25,13) (-23,23) (-21,5) (-19,97) (-17,-11) (-15,27) (-13,-23) (-11,-29) (-9,83) (-7,-83) (-5,25) (-3,-91) (-1,87) (1,-33) (3,-27) (4,4) (5,-87) (7,29) (9,-17) (11,21) (13,85) (15,9) (17,-21) (19,95) (21,19) (23,-19) (25,3) (27,-97) (29,15) (31,-15) (33,-63) (35,59) (37,51) (39,-93) (41,-103) (43,99) (45,89) (47,-81) (49,53) (51,-109) (53,65) (55,107) (57,-55) (59,-77) (61,77) (63,-53) (65,61) (67,103) (69,-71) (71,39) (73,-95) (75,-89) (77,-7) (79,-25) (81,1) (83,-65) (85,45) (87,71) (89,-31) (91,31) (93,73) (95,-5) (97,-59) (99,47) (101,-51) (103,-69) (105,49) (107,-1) (109,41)
};
\draw[thin] (axis cs:4,4)--(axis cs:23,14) node[anchor=west,font=\footnotesize] {$(4,4)$};
\end{axis}
\end{tikzpicture}
\caption{The $111$ queens of $C_{221}$ in Proposition~\ref{prop:10.1}. Dots give the exact integer positions, the frame marks the board boundary, and dashed lines show the two long diagonals. The labelled point $(4,4)$ is the unique queen in an even row or column.}
\label{fig:q221}
\end{figure}

\begin{proof}

Direct evaluation of the four line-index multisets of Appendix~\ref{app:coordinates} gives the same multiset for columns and rows:
\[
\mathcal O=\{-109,-107,\ldots,109\}\uplus\{4\}.
\]

The difference and sum indices are, respectively,
\begin{equation}\label{eq:10.1}
\begin{aligned}
\mathcal D&=\{2j:-24\le j\le24\}
 \uplus\{\pm(48+4j):1\le j\le31\},\\
\mathcal S&=\{2j:-23\le j\le23\}
 \uplus\{\pm(46+4j):1\le j\le31\}
 \uplus\{-80,88\}.
\end{aligned}
\end{equation}

Each of the four multisets has 111 distinct entries, so the queens are
pairwise nonattacking. The row and column indices also verify that every
queen lies in the interior and that every odd row and column is occupied.

The central intervals in~\eqref{eq:10.1} have maximal radii 24 and 23,
because the next difference and sum indices, \(\pm50\) and \(\pm48\),
are absent. The outer families both continue through \(j=31\); the next
pairs, \(\pm176\) and \(\pm174\), are absent. This gives the stated
values of \(e,f,u\). Both long diagonals are occupied, and
\[
e+f=47\equiv1\pmod2,\qquad e+f+2u=109\ge108=\frac{221-5}{2}.
\]

The Type-A characterization~\eqref{eq:2.1} now gives the 1-cover property.
The lower bound \(\gamma(Q_{4k+1})\ge2k+1\), recorded
in~\cite[equation~(2)]{ow2001}, gives \(\gamma(Q_{221})\ge111\). Hence
\[
111\le\gamma(Q_{221})\le i(Q_{221})\le|C_{221}|=111.\qedhere
\]
\end{proof}

The verified seed can now be substituted into the classical amplification
formula.

\subsection{The classical amplification formula}

The applicable border-free branch of the amplification theorem has the
following hypotheses. Let \(D\) be a Type-A \(p\)-cover of an odd board of
order \(n\), with \(d\) queens on interior squares and with both long
diagonals occupied. If either \(p=1\) and \(n\equiv1\pmod4\), or \(p=0\)
and \(n\equiv3\pmod4\), then (\cite[Theorem~5]{ow2001};
\cite{weakley2002upper})
\begin{equation}\label{eq:10.2}
\gamma(Q_N)\le\frac{d+1}{n}N+O_D(1).
\end{equation}

If \(D\) is independent, the same theorem gives
\begin{equation}\label{eq:10.3}
i(Q_N)\le\frac{d+2}{n}N+O_D(1).
\end{equation}

In both statements, \(D\) is fixed while \(N\) tends to infinity, and the
bounded term may depend on \(D\). Proposition~\ref{prop:10.1} supplies all
the hypotheses for \(n=221\), \(d=111\), yielding~\eqref{eq:1.3}.

These upper bounds concern ordinary and independent domination. Applying
the orthodox lower bound to an amplified configuration requires the
corresponding parity-occupancy conditions as well as the board-order
threshold.

The coefficients \(30/59\) and \(91/177\) were obtained by Neuhaus \cite[Corollary~3.24]{neuhaus2009} from an independent, border-free Type-A 1-cover of \(Q_{177}\) with 89 queens. Applying the same amplification branch to \(Q_{221}\) gives the respective decreases
\[
\frac{30}{59}-\frac{112}{221}=\frac{22}{13039},\qquad
\frac{91}{177}-\frac{113}{221}=\frac{110}{39117}.
\]

\subsection{A restriction on growing seeds}

Consider next a sequence of admissible seeds whose orders tend to infinity.
This limit varies the seed itself; the preceding subsection fixes the seed
and lets the amplified board grow. The following consequence of
Theorem~\ref{thm:2.1} bounds the coefficients in
\eqref{eq:10.2}--\eqref{eq:10.3} along such sequences.

\begin{corollary}\label{cor:10.2}
Let \(D\) be a seed satisfying the hypotheses of \eqref{eq:10.2}, with \(n\ge400001\). Then
\begin{equation}\label{eq:10.4}
\frac{d+1}{n}>c_0+\frac{1-c_0}{n}>c_0,
\qquad c_0=\frac{8001}{16000}.
\end{equation}

For an independent seed, the coefficient in \eqref{eq:10.3} satisfies
\begin{equation}\label{eq:10.5}
\frac{d+2}{n}>c_0+\frac{2-c_0}{n}>c_0.
\end{equation}

Consequently, along every sequence of such seeds whose orders tend to infinity, the lower limit of either applicable coefficient is at least \(1/2+1/16000\).
\end{corollary}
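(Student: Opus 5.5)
The plan is to apply Theorem~\ref{thm:2.1} directly to the seed \(D\). The hypotheses of \eqref{eq:10.2} make \(D\) a \(p\)-cover of an odd board of order \(n\ge400001\) with both long diagonals occupied. So the strengthened form of \eqref{eq:2.2}, without the final subtraction of one, applies and gives
\[
d=|D|>c_0(n-1).
\]
The border-free condition only says that the \(d\) queens are interior, so \(d\) is the full size of \(D\). Attacking or boundary queens play no role in the lower bound.

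Adding one and dividing by \(n\) gives
\[
\frac{d+1}{n}>\frac{c_0n-c_0+1}{n}=c_0+\frac{1-c_0}{n}.
\]
Since \(c_0<1\), the extra term is positive, and \eqref{eq:10.4} follows.

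For an independent seed the same estimate with \(d+2\) gives
\[
\frac{d+2}{n}>c_0+\frac{2-c_0}{n}>c_0,
\]
which is \eqref{eq:10.5}.

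For the limit statement, take a sequence of admissible seeds with orders \(n\to\infty\). Eventually \(n\ge400001\), so each coefficient exceeds \(c_0\) from that point on. Hence the lower limit is at least \(c_0=1/2+1/16000\).

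There is no real obstacle here. The only point to check is that the hypotheses of \eqref{eq:10.2} include occupancy of both long diagonals. This is what allows the sharper form of Theorem~\ref{thm:2.1}. Even without it, the weaker bound \(d>c_0(n-1)-1\) would still give the limit statement, though it would not give the strict constants in \eqref{eq:10.4}.
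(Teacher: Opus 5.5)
Your proof is correct and follows essentially the same argument as the paper: the seed meets both long diagonals, so the stronger form of Theorem~\ref{thm:2.1} gives \(d>c_0(n-1)\), and adding one or two, dividing by \(n\), and taking lower limits yields \eqref{eq:10.4}, \eqref{eq:10.5}, and the asymptotic claim. Your side remark is also accurate: the weaker bound \(d>c_0(n-1)-1\) would still give the limit statement, but not the strict finite-order inequalities.
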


\begin{proof}

Each seed meets both long diagonals, so the stronger part of
Theorem~\ref{thm:2.1} gives \(d>c_0(n-1)\). Adding one or two and dividing
by \(n\) gives the two finite-order inequalities. Taking lower limits along
the sequence proves the final assertion.
\end{proof}

\section{Discussion}\label{sec:11}

The density theorems have consequences for the zero-cover construction
route, orthodox extension costs, and two conjectures of Neuhaus. These
applications and the remaining questions are considered below.

The density gap for orthodox domination follows from a gap for \(p\)-covers
and a uniform bound on the cost of completion. Its dependence on the
\(p\)-cover estimate and the completion cost is explicit in the transfer
formula following~\eqref{eq:8.3}. Improving either estimate would sharpen
the orthodox bound.

The zero-cover route to asymptotic half-density discussed by
Weakley~\cite[p.~52]{weakley2018} requires arbitrarily large seeds of size
\((n+1)/2\). By Theorem~\ref{thm:2.3}, such seeds are confined to finitely
many orders.

For arbitrary dominating sets, the exact extension cost \(\delta\) measures
the part of the bound supplied by the parity constraints.
Equation~\eqref{eq:9.1} forces this cost to be linear along every sequence
of size \(n/2+o(n)\). Establishing a positive asymptotic gap for unrestricted
queen domination remains an open problem.

Two related conjectures were formulated by Neuhaus \cite[p.~112]{neuhaus2009}. Conjecture 5.2 asserts that every minimum dominating set is orthodox for all sufficiently large \(n\). Conjecture 5.3 asserts, for \(n\notin\{3,11\}\), that
\[
\gamma(Q_n)\in\{\lceil n/2\rceil,\lceil n/2\rceil+1\}.
\]

\begin{corollary}\label{cor:11.1}
Conjectures 5.2 and 5.3 of Neuhaus are incompatible.
\end{corollary}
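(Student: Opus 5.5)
Assume both conjectures hold and derive a contradiction from Theorem~\ref{thm:2.3}. Conjecture 5.2 gives a threshold \(n_0\) such that every minimum dominating set of \(Q_n\) is orthodox for \(n\ge n_0\). Conjecture 5.3 gives \(\gamma(Q_n)\le\lceil n/2\rceil+1\le n/2+3/2\) for all \(n\notin\{3,11\}\). Fix any \(n\ge\max\{n_0,400001\}\), and let \(C\) be a minimum dominating set of \(Q_n\). Such a set exists because the board is finite.

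By Conjecture 5.2, \(C\) is an orthodox cover. The definition of orthodox cover in Section~\ref{sec:2} allows arbitrary cardinality and attacking queens, so no further hypothesis needs checking. Theorem~\ref{thm:2.3} therefore applies and gives
\[
\gamma(Q_n)=|C|>c_1n-2=\frac n2+\frac{n}{80000}-2.
\]
Combining this with Conjecture 5.3 yields \(n/80000<7/2\), that is, \(n<280000\). This contradicts \(n\ge400001\).

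So there is no real obstacle. The only points to get right are the correct reading of the conjectures and the fact that the threshold in Conjecture 5.2 is unspecified. Taking \(n\) larger than both thresholds handles that. One can also note that the choice \(n=\max\{n_0,400001\}\) is never one of the excluded orders \(3,11\). If one prefers not to rely on minimality, the same contradiction follows from Theorem~\ref{thm:2.4} with \(\delta(C)=0\). For every sufficiently large \(n\), the two conjectures together would force an orthodox cover of size at most \(\lceil n/2\rceil+1\), which Theorem~\ref{thm:2.3} excludes.
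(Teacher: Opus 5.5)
Your proposal is correct and follows the paper's proof: Conjecture~5.2 makes a minimum dominating set of large order an orthodox cover, Conjecture~5.3 bounds its size by \(n/2+3/2\), and Theorem~\ref{thm:2.3} gives the contradictory lower bound \(n/2+n/80000-2\). Your explicit computation (\(n/80000<7/2\), so \(n<280000\)) makes the paper's ``eventually exceeds'' precise, and the aside about Theorem~\ref{thm:2.4} adds nothing, since \(\delta(C)=0\) simply reduces it to Theorem~\ref{thm:2.3}.
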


\begin{proof}

Together, the conjectures would give an orthodox minimum dominating set of
size at most \(n/2+3/2\) for all sufficiently large \(n\).
Theorem~\ref{thm:2.3} gives the strict lower bound \(n/2+n/80000-2\),
which eventually exceeds that upper bound.
\end{proof}

If the near-half upper bound in Conjecture 5.3 holds for arbitrarily large orders, every minimum dominating set at those orders is eventually nonorthodox.

The optimal density coefficient and the smallest valid board order remain
to be determined. The rational margins in Sections~\ref{sec:4} and
\ref{sec:6} give explicit points at which the numerical estimates can be
refined.

\section{Computational verification}\label{sec:12}

The numerical certificates are reproduced by exact finite checks; the
orthodox density theorem is verified formally from its definitions. Their
scope and accompanying data are described below.

The source package contains the line-weight and \(Q_{221}\) data together
with Python checkers using exact arithmetic and the standard library.
Appendices~\ref{app:knots} and~\ref{app:coordinates} list the complete
coefficients and coordinates. The earlier \(Q_{221}\) certificate is also
available in~\cite{kong2026}.

For the line weights, Lemma~\ref{lem:5.1} gives the finite vertex evaluation,
integration, and variation calculations. All these calculations are
reproduced by the accompanying exact checker.
The verifier in the accompanying source package and the repository cited in~\cite{kong2026} checks all \(48{,}841\) board squares and all queen pairs directly from the coordinates.

Theorem~\ref{thm:2.3} is fully formalized in Lean~4.33.0 with
mathlib~v4.33.0. The classification, moment estimates, affine weight
inequalities, lattice capacities, completion, and parity alignment are derived
from the finite chessboard and orthodox-cover definitions. The proof source
is included in the accompanying package. Its final declaration is
\texttt{orthodox\_density\_gap}, and the lattice argument follows
Section~\ref{sec:6}.

\appendix
\section{Complete knot data}\label{app:knots}
The following knot values specify the three functions of
Lemma~\ref{lem:5.1} by linear interpolation.
For \(F\), set \(F_i=F(i/40)\) for \(0\le i\le40\).
For \(P_L\), set \(L_i=P_L(i/40)\) for \(0\le i\le63\), and
\(L_{64}=P_L(789/500)\).
For \(P_S\), set \(S_i=P_S(i/40)\) for \(0\le i\le16\), and
\(S_{17}=P_S(53/125)\).
All 124 values in the following table are integers. Within each function,
indices increase down one pair of columns and then continue in the next pair.

From these knots, the exact checker reconstructs every linearity region and
evaluates its vertices using rational arithmetic. The same data determine the
trapezoidal integrals, endpoint-maximum sums, and comparison margins.
The weights were found by numerical optimization; the displayed rational data
provide a complete finite certificate for their verification.

\begingroup
\small
\setlength{\tabcolsep}{5pt}
\renewcommand{\arraystretch}{1.04}
\begin{center}
\begin{tabular}{@{}rrrrrrrrrrrr@{}}
\toprule
\multicolumn{4}{c}{\(F\)} & \multicolumn{6}{c}{\(P_L\)} & \multicolumn{2}{c}{\(P_S\)}\\
\cmidrule(lr){1-4}\cmidrule(lr){5-10}\cmidrule(lr){11-12}
\(i\)&\(F_i\)&\(i\)&\(F_i\)&\(i\)&\(L_i\)&\(i\)&\(L_i\)&\(i\)&\(L_i\)&\(i\)&\(S_i\)\\
\midrule
0 & 500000 & 21 & 3541 & 0 & 781301 & 22 & 444939 & 44 & 217939 & 0 & 0 \\
1 & 500000 & 22 & 381 & 1 & 781301 & 23 & 414361 & 45 & 218319 & 1 & 0 \\
2 & 489602 & 23 & 0 & 2 & 778521 & 24 & 381003 & 46 & 218700 & 2 & 5200 \\
3 & 479203 & 24 & 2400 & 3 & 775742 & 25 & 347645 & 47 & 216300 & 3 & 10399 \\
4 & 458405 & 25 & 7579 & 4 & 770182 & 26 & 311507 & 48 & 213901 & 4 & 20798 \\
5 & 437608 & 26 & 15538 & 5 & 764622 & 27 & 275369 & 49 & 208722 & 5 & 31197 \\
6 & 406411 & 27 & 26277 & 6 & 756283 & 28 & 250918 & 50 & 203543 & 6 & 46795 \\
7 & 375215 & 28 & 39796 & 7 & 747943 & 29 & 231666 & 51 & 195584 & 7 & 62393 \\
8 & 333620 & 29 & 56094 & 8 & 736824 & 30 & 212414 & 52 & 187625 & 8 & 83191 \\
9 & 292025 & 30 & 75173 & 9 & 725704 & 31 & 198362 & 53 & 176886 & 9 & 103988 \\
10 & 240031 & 31 & 103277 & 10 & 711805 & 32 & 184310 & 54 & 166147 & 10 & 129985 \\
11 & 188037 & 32 & 131382 & 11 & 697906 & 33 & 175457 & 55 & 152628 & 11 & 155982 \\
12 & 157078 & 33 & 149087 & 12 & 681227 & 34 & 166604 & 56 & 139109 & 12 & 187179 \\
13 & 128899 & 34 & 179285 & 13 & 664548 & 35 & 162951 & 57 & 122811 & 13 & 218375 \\
14 & 103500 & 35 & 212263 & 14 & 645089 & 36 & 159298 & 58 & 106512 & 14 & 254771 \\
15 & 80880 & 36 & 248020 & 15 & 625630 & 37 & 170797 & 59 & 87434 & 15 & 291167 \\
16 & 61041 & 37 & 286558 & 16 & 603391 & 38 & 182297 & 60 & 68355 & 16 & 332762 \\
17 & 43981 & 38 & 327875 & 17 & 581152 & 39 & 191017 & 61 & 46497 & 17 & 372693 \\
18 & 29702 & 39 & 371972 & 18 & 556133 & 40 & 199737 & 62 & 24639 &  &  \\
19 & 18202 & 40 & 418849 & 19 & 531115 & 41 & 205678 & 63 & 0 &  &  \\
20 & 9482 &  &  & 20 & 503316 & 42 & 211618 & 64 & 0 &  &  \\
 &  &  &  & 21 & 475518 & 43 & 214778 &  &  &  &  \\
\bottomrule
\end{tabular}
\end{center}
\endgroup

\Needspace{12\baselineskip}
\section{The \texorpdfstring{\(Q_{221}\)}{Q221} coordinate certificate}\label{app:coordinates}
These 111 points, reproduced from~\cite{kong2026}, use the centered integer
coordinates \((X,Y)\in[-110,110]^2\) of Section~\ref{sec:10}.
Read from left to right and then from top to bottom. Their line-index
signatures and amplification properties are verified in
Proposition~\ref{prop:10.1}.

\begingroup
\small
\setlength{\tabcolsep}{8pt}
\renewcommand{\arraystretch}{1.00}
\begin{longtable}{@{}cccccc@{}}
\toprule
\((X,Y)\)&\((X,Y)\)&\((X,Y)\)&\((X,Y)\)&\((X,Y)\)&\((X,Y)\)\\
\midrule
\endhead
\bottomrule
\endfoot
\((-109,-61)\) & \((-107,-43)\) & \((-105,-37)\) & \((-103,57)\) & \((-101,63)\) & \((-99,33)\) \\
\((-97,-57)\) & \((-95,-35)\) & \((-93,-9)\) & \((-91,17)\) & \((-89,11)\) & \((-87,37)\) \\
\((-85,43)\) & \((-83,-3)\) & \((-81,75)\) & \((-79,69)\) & \((-77,-85)\) & \((-75,-47)\) \\
\((-73,-41)\) & \((-71,101)\) & \((-69,-49)\) & \((-67,-79)\) & \((-65,79)\) & \((-63,105)\) \\
\((-61,35)\) & \((-59,-107)\) & \((-57,-101)\) & \((-55,81)\) & \((-53,-45)\) & \((-51,-75)\) \\
\((-49,55)\) & \((-47,93)\) & \((-45,67)\) & \((-43,109)\) & \((-41,-39)\) & \((-39,-67)\) \\
\((-37,-73)\) & \((-35,-99)\) & \((-33,-105)\) & \((-31,-13)\) & \((-29,91)\) & \((-27,7)\) \\
\((-25,13)\) & \((-23,23)\) & \((-21,5)\) & \((-19,97)\) & \((-17,-11)\) & \((-15,27)\) \\
\((-13,-23)\) & \((-11,-29)\) & \((-9,83)\) & \((-7,-83)\) & \((-5,25)\) & \((-3,-91)\) \\
\((-1,87)\) & \((1,-33)\) & \((3,-27)\) & \((4,4)\) & \((5,-87)\) & \((7,29)\) \\
\((9,-17)\) & \((11,21)\) & \((13,85)\) & \((15,9)\) & \((17,-21)\) & \((19,95)\) \\
\((21,19)\) & \((23,-19)\) & \((25,3)\) & \((27,-97)\) & \((29,15)\) & \((31,-15)\) \\
\((33,-63)\) & \((35,59)\) & \((37,51)\) & \((39,-93)\) & \((41,-103)\) & \((43,99)\) \\
\((45,89)\) & \((47,-81)\) & \((49,53)\) & \((51,-109)\) & \((53,65)\) & \((55,107)\) \\
\((57,-55)\) & \((59,-77)\) & \((61,77)\) & \((63,-53)\) & \((65,61)\) & \((67,103)\) \\
\((69,-71)\) & \((71,39)\) & \((73,-95)\) & \((75,-89)\) & \((77,-7)\) & \((79,-25)\) \\
\((81,1)\) & \((83,-65)\) & \((85,45)\) & \((87,71)\) & \((89,-31)\) & \((91,31)\) \\
\((93,73)\) & \((95,-5)\) & \((97,-59)\) & \((99,47)\) & \((101,-51)\) & \((103,-69)\) \\
\((105,49)\) & \((107,-1)\) & \((109,41)\) &  &  &  \\
\end{longtable}
\endgroup

\paragraph{AI-assistance statement.}
ChatGPT 5.6 participated in manuscript preparation and code development.

\end{document}